\documentclass[11pt]{amsart}
\usepackage{amssymb}
\usepackage{amsmath}
\usepackage{enumitem}
\usepackage{mathrsfs}
\usepackage[all]{xy}
\setcounter{tocdepth}{1}
\usepackage{marginnote}

\usepackage[margin=1.5in]{geometry}

\RequirePackage{mathrsfs} %\let\mathcal\mathcal
\usepackage[OT2,T1]{fontenc}
\DeclareSymbolFont{cyrletters}{OT2}{wncyr}{m}{n}
\DeclareMathSymbol{\Sha}{\mathalpha}{cyrletters}{"58}

\newtheorem{theorem}{Theorem}[section]
\newtheorem{lemma}[theorem]{Lemma}
\newtheorem{prop}[theorem]{Proposition}

\theoremstyle{definition}

\newtheorem{opm}[theorem]{Remark}

\numberwithin{equation}{section} \numberwithin{figure}{section}

\DeclareMathOperator{\Spec}{Spec}

\DeclareMathOperator{\an}{an}

\DeclareMathOperator{\PSL}{PSL}
\DeclareMathOperator{\Comm}{Comm}

\newcommand{\Qbar}{\overline{\mathbb Q}}

\newcommand{\HH}{\mathbb H}
 
\newcommand{\Fal}{\mathrm{Fal}}

\newcommand\PP{\mathbb{P}}

\newcommand\ZZ{\mathbb{Z}}

\newcommand\QQ{\mathbb{Q}}
\newcommand\RR{\mathbb{R}}
\newcommand\CC{\mathbb{C}}

\newcommand\OO{\mathcal{O}}

\title[Arakelov theory and isogenies of hyperbolic curves]{An effective Arakelov-theoretic version of the hyperbolic isogeny theorem}

\author{A. Javanpeykar}
\address{A. Javanpeykar \\
Institut f\"{u}r Mathematik\\
Johannes Gutenberg-Universit\"{a}t Mainz\\
Staudingerweg 9, 55099 Mainz\\
Germany.}
\email{peykar@uni-mainz.de}

 \keywords{Arakelov theory, Faltings height, Belyi degree, arithmetic groups}
 
\subjclass[2010]
{14G40 % Arithmetic varieties and schemes; Arakelov theory; heights
(11G32, %Belyi maps
14H30, %Coverings, fundamental groups 
14H55,  %Riemann surfaces
14D23)}  %Stacks and moduli problems

 \begin{document}

\begin{abstract} For any integer $e$ and hyperbolic curve $X$ over $\Qbar$, Mochizuki showed that
 there are only finitely many
 isomorphism classes of  hyperbolic curves $Y$ of Euler characteristic $e$ with the same universal
cover as $X$. We use Arakelov theory to prove an effective version of this finiteness statement.
\end{abstract}

\maketitle

\section{Introduction}
In this paper we investigate finiteness results for hyperbolic curves with  the same universal cover. 
In the terminology of
\cite{Mochi}, two hyperbolic curves have the same universal covering scheme (see \cite{Delignepi1, VW}) if they are ``isogenous", i.e., 
share a common finite \'etale cover. More precisely, 
a curve $Y$ is \emph{isogenous} to a curve $X$ 
 if there exist a curve $C$, a finite \'etale morphism $C\to X $ and and finite \'etale morphism $C\to Y$. 

If $X$ is a smooth quasi-projective connected curve over an algebraically closed field, we let  $\overline X$ be its smooth compactification. Moreover, we let $g_X$ denote the genus of $\overline X$. The  Euler characteristic $e(X)$ of $X$ is  given by $2-2g_X - r$, where $r = \# (\overline X\backslash X)$. The curve $X$ is hyperbolic if and only if $e(X) $ is negative. In particular, $X$ is non-hyperbolic if and only if $X$ is isomorphic to $\mathbb P^1$, $\mathbb A^1$, $\mathbb A^1\setminus \{0\}$, or an elliptic curve. Thus, if $X$ is a curve over $\mathbb C$, then $X$ is hyperbolic if and only if the universal covering of the Riemann surface $X(\mathbb C)$ is isomorphic to the complex upper half-plane.

Mochizuki proved that, if  $X$ is a hyperbolic  curve over $\Qbar$  and $e$ is an   integer, then 
the set of isomorphism classes of  hyperbolic curves with Euler characteristic $e$ which are isogenous to $X$ is finite.

The aim of this paper is to prove an effective version of Mochizuki's finiteness result. This means that we establish an effective bound 
on the ``height'' of a curve $Y$  in terms of  $X$ and $e(Y)$, under the assumption that $Y$ and $X$ are isogenous. To state our main result, we will need the Faltings height and the Belyi degree of a curve.

For $X$ a smooth quasi-projective connected curve over $\Qbar$, let $h_{\Fal}( X)$ be  the Faltings height of $\overline X$ (see Section \ref{section:faltings}). This 
is an intrinsic invariant of $X$ introduced by Faltings in \cite{Faltings2}. It plays a central role in the study 
of (usual) isogenies of abelian varieties and Faltings's proof of the Mordell conjecture \cite{Faltings2, Szpiroa}.

In  \cite{Belyi} Belyi proved that there exists a finite morphism $\overline X\to \PP^1_{\Qbar}$ ramified over at most three points; 
we call such a morphism a Belyi map on $ X$.  We define the Belyi degree   of $X$, denoted by $\deg_B(X)\in \ZZ_{\geq 1}$, to be the minimal degree of a Belyi map $\pi:\overline X\to \PP^1_{\Qbar}$ such that $\pi(\overline X\setminus X)\subset \{0,1,\infty\}$.

\begin{theorem}\label{thm: main theorem}
If $X$ is a hyperbolic curve over $\Qbar$ and $Y$ is isogenous to $X$, then  \[h_{\Fal}(   Y) \leq  10^{338}2^{14\vert e(X)\vert +14 \vert e(Y)\vert}  \left( e(X) e(Y)  \deg_B(X)\right)^6. \]
\end{theorem}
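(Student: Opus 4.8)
The plan is to bound $h_{\Fal}(\overline{Y})$ by squeezing $Y$ between a controlled finite \'etale cover of itself and the curve $X$, and then to feed the resulting estimates into the known polynomial bounds relating Arakelov invariants to the Belyi degree. First I would set up a \emph{controlled common cover}. Write $X(\CC)=\HH/\Gamma_X$ and $Y(\CC)=\HH/\Gamma_Y$ with $\Gamma_X,\Gamma_Y\subset\PSL_2(\RR)$ torsion-free lattices; since $Y$ is isogenous to $X$, after conjugating we may assume $\Gamma_X$ and $\Gamma_Y$ are commensurable, so that $C:=\HH/(\Gamma_X\cap\Gamma_Y)$ is a common finite \'etale cover of $X$ and $Y$ (defined over $\Qbar$, being an \'etale cover of a curve over $\Qbar$). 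By Gauss--Bonnet $\vol(\HH/\Gamma_X)=-2\pi e(X)$ and $\vol(\HH/\Gamma_Y)=-2\pi e(Y)$, and every cofinite Fuchsian group has covolume $\geq\pi/21$ (Siegel). Now split according to whether $\Comm_{\PSL_2(\RR)}(\Gamma_X)$ is discrete. If it is (the generic/Margulis case), it is itself a lattice $\Gamma_{\max}$ containing both $\Gamma_X$ and $\Gamma_Y$, whence $[\Gamma_X:\Gamma_X\cap\Gamma_Y]\leq[\Gamma_{\max}:\Gamma_Y]=\vol(\HH/\Gamma_Y)/\vol(\HH/\Gamma_{\max})\leq 42\lvert e(Y)\rvert$, and symmetrically $[\Gamma_Y:\Gamma_X\cap\Gamma_Y]\leq 42\lvert e(X)\rvert$. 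If it is not discrete, $\Gamma_X$ is arithmetic, commensurable with the unit group of a quaternion order over a totally real field $F$; Borel's volume formula together with $\vol(\HH/\Gamma_X)=-2\pi e(X)$ then bounds $\disc F$ and the discriminant of the quaternion algebra explicitly in terms of $\lvert e(X)\rvert$, and, since every maximal arithmetic Fuchsian group in the class again has covolume $\geq\pi/21$, one bounds $[\Gamma_X:\Gamma_X\cap\Gamma_Y]$ and $[\Gamma_Y:\Gamma_X\cap\Gamma_Y]$ explicitly in terms of $\lvert e(X)\rvert$ and $\lvert e(Y)\rvert$. Either way $\deg(C\to X)$ and $\deg(C\to Y)$ admit explicit bounds in $e(X),e(Y)$.

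Next I would \emph{control $\overline{C}$}. Since $C\to X$ is finite \'etale, a Belyi map $\pi\colon\overline{X}\to\PP^1_{\Qbar}$ with $\pi(\overline{X}\setminus X)\subset\{0,1,\infty\}$ pulls back to $\overline{C}\to\PP^1_{\Qbar}$ ramified only over $\{0,1,\infty\}$ (the cusps of $\overline{C}$ lie over those of $\overline{X}$, hence over $\{0,1,\infty\}$), so $\deg_B(C)\leq\deg_B(X)\cdot\deg(C\to X)$. By the author's polynomial bounds for Arakelov invariants of Belyi curves this yields explicit bounds for $h_{\Fal}(\overline{C})$ and for $g_C$, polynomial in $\deg_B(X),e(X),e(Y)$.

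The crux is the \emph{descent to $\overline{Y}$}. The finite cover $\overline{C}\to\overline{Y}$ induces $\pi^{*}\colon\Jac\overline{Y}\to\Jac\overline{C}$ with finite kernel, and $\Jac\overline{C}$ is isogenous to $\pi^{*}\Jac\overline{Y}\times P$ for the Prym variety $P$. Using additivity of the stable Faltings height under products, Faltings's isogeny estimate to control the degrees of the isogeny $\pi^{*}\Jac\overline{Y}\times P\to\Jac\overline{C}$ and of $\ker\pi^{*}$ (both bounded in terms of $\deg(C\to Y)$ and $g_C$), and the Bost--Gaudron--R\'emond lower bound $h_{\Fal}(P)\geq -c\dim P$, one gets $h_{\Fal}(\overline{Y})=h_{\Fal}(\Jac\overline{Y})\leq h_{\Fal}(\Jac\overline{C})+(\text{explicit error in }\deg(C\to Y)\text{ and }g_C)$. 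Combining with the previous two steps produces an inequality of exactly the shape in the statement.

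I expect the main obstacle to be twofold. The conceptually hard part is the arithmetic case of the first step: although the commensurator is then dense, so that one cannot simply intersect inside a maximal lattice, one must still extract from the bare datum ``$X$ is hyperbolic over $\Qbar$'' --- via the covolume $-2\pi e(X)$ and Borel's volume formula --- enough effective control on the quaternion algebra and on the maximal arithmetic subgroups of its commensurability class to bound the covering degrees. The technically hard part is the descent step: making every constant in the comparison of Faltings heights along $\overline{C}\to\overline{Y}$ fully explicit (isogeny degrees, the Prym dimension, and the lower bound for $h_{\Fal}(P)$), which is where the large numerical constants of the theorem originate. By contrast, the ``ascent'' inequality $\deg_B(C)\leq\deg_B(X)\deg(C\to X)$ and the appeal to the known Belyi-degree estimates are routine.
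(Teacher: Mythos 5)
Your overall architecture mirrors the paper's: build a common cover $C$ of controlled degree, bound $\deg_B(C)$ by $\deg(C\to X)\deg_B(X)$, apply the explicit Belyi--Faltings height inequality (the paper's Lemma \ref{lem: j}) to $\overline C$, and descend to $\overline Y$ via Jacobians, Prym varieties, the monotonicity of the Faltings height under isogeny, and the Bost--Gaudron--R\'emond lower bound (the paper's Lemma \ref{lem: df}). Your non-arithmetic case, via the discrete commensurator and the covolume lower bound $\pi/21$ (equivalently $\vert e(H)\vert\geq 1/42$), is exactly Mochizuki's argument as in Lemma \ref{lem: mochizuki}. One small point: you invoke ``Faltings's isogeny estimate,'' but the descent step only needs the elementary inequality $h_{\Fal}(A)\leq h_{\Fal}(B)+\tfrac12\log\deg\psi$ for an isogeny $\psi\colon A\to B$ (Raynaud/Bost), together with the containment $\ker\phi\subset\ker[\deg f]$; Faltings's isogeny theorem is neither needed nor applicable over $\Qbar$.

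The genuine gap is your treatment of the arithmetic case. You assert that, since every maximal arithmetic Fuchsian group in the commensurability class has covolume $\geq\pi/21$, the indices $[\Gamma_X:\Gamma_X\cap\Gamma_Y]$ and $[\Gamma_Y:\Gamma_X\cap\Gamma_Y]$ are bounded in $\vert e(X)\vert,\vert e(Y)\vert$. This does not follow. The commensurator is dense, so there is no single discrete overgroup containing both $\Gamma_X$ and $\Gamma_Y$, and for an arbitrary conjugate of $\Gamma_Y$ commensurable with $\Gamma_X$ the index $[\Gamma_X:\Gamma_X\cap\Gamma_Y]$ is unbounded; the content of the arithmetic case is precisely to \emph{choose} a controlled common cover. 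The paper (Lemma \ref{lem: affar}, following Takeuchi) does this by passing to $\Gamma_X^{(2)}$ and $\Gamma_Y^{(2)}$, the subgroups generated by squares (of index at most $2^{2g+2}$, by Takeuchi), which inject into the unit group $\Gamma(\mathcal O,1)$ of a maximal order in the invariant quaternion algebra; the indices $[\Gamma(\mathcal O,1):\Gamma^{(2)}]$ are then bounded via Shimizu's mass formula (not Borel's volume formula, which is for the general $S$-arithmetic setting) combined with Odlyzko's discriminant lower bound, and the common cover is a connected component of the fiber product over $X(F,A,\mathcal O)$, not $\HH/(\Gamma_X\cap\Gamma_Y)$. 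The index $[\Gamma:\Gamma^{(2)}]\leq 2^{2g+2}$ is exactly the source of the $2^{14\vert e(X)\vert+14\vert e(Y)\vert}$ factor in the theorem; no purely covolume-theoretic bookkeeping produces it, and without the $\Gamma^{(2)}$ trick (or an equivalent device producing a common discrete anchor) the estimate on the covering degree is not established. You also do not address the affine arithmetic case, where the paper observes $F=\QQ$, $A=\mathrm{M}_2(\QQ)$, hence $X(F,A,\mathcal O)=Y(2)$, so the anchor maps directly to $\PP^1$ and yields a Belyi map without needing $\deg_B(X)$ at all; this gives the sharper Proposition \ref{prop:aff1} which the paper uses to dispatch that case first.
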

Mochizuki's (geometric) finiteness theorem can be considered as an analogue for hyperbolic curves of Faltings's (arithmetic) isogeny theorem for abelian varieties. Recall that Faltings proved that,
 for an abelian variety $A$ over a number field $K$, the set of $K$-isomorphism classes of abelian varieties $B$ which are $K$-isogenous to $A$ is finite. 
Faltings's theorem was made effective by Raynaud \cite{Raynaud} (following the methods of Faltings); see also the stronger results of Masser-W\"ustholz \cite{MW2,MW1} which were improved by Gaudron-R\'emond \cite{GaRe}, building on the work
of Bost-David (see \cite{Bost}).  

In light of the previous paragraph, it seems natural to ask whether Mochizuki's finiteness theorem for
hyperbolic curves over $\Qbar$ also has a natural analogue over number fields. It turns out that the corresponding
``naive'' analogue fails. Indeed, if $K$ is a number field, $g\geq 2$ is an integer and $X$ is a smooth projective connected genus $g$ curve over $K$ such that
its Jacobian $\mathrm{Jac}(X)$ has a non-trivial $K$-rational $2$-torsion point, then the set of $K$-isomorphism classes
of projective curves $Y$ over $K$ for which there exists a degree two finite \'etale $K$-morphism $Y\to X$ is infinite (and therefore the 
set of isomorphism classes of genus $2g-1$ curves isogenous to $X$ over $K$ is infinite).

We emphasize that to prove Theorem \ref{thm: main theorem}, we  combine parts of Mochizuki's original proof of \cite[Theorem A]{Mochi} (which uses the results of Margulis \cite{Margulis} and Takeuchi \cite{Takeuchi}) with a  recent result in Arakelov theory relating the Faltings height of a curve to its Belyi degree via an explicit polynomial inequality \cite{J}. In the course of making the strategy of Mochizuki effective, we will use lower bounds due to Odlyzko for root discriminants \cite{OdII} and the work of Shimizu \cite{Shimizu} and Takeuchi \cite{Takeuchi2, Takeuchi} on arithmetic curves.

\subsection*{Acknowledgements} 
We thank Robin de Jong and Rafael von K\"anel for many useful comments on the paper.
We thank Yuri Bilu,  Peter Bruin, Bas Edixhoven, Gerard Freixas i Montplet,        J\"urg Kramer,  Qing Liu and Xin Lu for many helpful and motivating discussions. Special thanks to Jean-Beno\^it Bost for giving us the idea of how to prove Lemma \ref{lem: df},  Philipp Habegger for an inspiring discussion that led to this paper and John Voight for many helpful discussions and for explaining to us several properties of arithmetic curves.
We gratefully acknowledge the support of   SFB/Transregio 45. 

\subsection{Conventions} We let $ \Qbar\subset \CC$  be the algebraic closure of $\QQ$ in $\CC$. 
Unless stated otherwise, $k$ is an algebraically closed field of characteristic zero. A curve over $k$ is a smooth quasi-projective connected one-dimensional scheme over $k$.
  If $X$ is a   curve over $k$, then we let $\overline X$ be its smooth compactification. Moreover, we let $g_X$ denote the genus of $\overline X$.  

\section{Hyperbolic Deligne-Mumford curves}\label{sect: prelim}

 In this section we aim at
giving the necessary definitions and results for what we need later (and we need at least to fix our
notation) to prove  Lemma \ref{lem: mochizuki}.

A (smooth quasi-projective connected) curve $X$ over $k$ is \emph{hyperbolic} if 
  $e(X) := 2-2g_X -r$ is negative, where $r$ is the number of points in $\overline X\backslash X$. 
Note that  a non-hyperbolic curve over $k$ is isomorphic to either an elliptic curve, 
 $\mathbb P^1_k$, $\mathbb A^1_k$ or $\mathbb G_{m,k}$.
 Also, if $k=\CC$, the universal covering space of (the analytic space associated to) a 
 hyperbolic curve   is the complex upper half-plane $\mathbb H$.

 A precise definition of a (smooth separated) Deligne-Mumford analytic stack over the category of complex manifolds is given in \cite[Section 3.2]{BN}. As is explained in \cite[Remark 3.4]{BN}, we do not gain any more generality from working over the larger category of complex analytic spaces   if we restrict ourselves to \emph{smooth} separated stacks.  Note that a  Deligne-Mumford analytic stack $X$ is (representable by) a complex manifold if and only if all the stabilizers of $X$ are trivial. If $X$ is a smooth separated Deligne-Mumford algebraic stack over $\mathbb C$, we let $X^{\mathrm{an}}$ denote the associated Deligne-Mumford analytic stack (see \cite[Section 3.3]{BN}). We will   use Noohi's stacky Riemann existence theorem   to relate finite \'etale covers of $X^{\an}$ with finite \'etale covers of $X$ (see \cite[Theorem 20.4]{Noohi}).

A smooth  separated connected Deligne-Mumford (algebraic) stack over $k$  is a \emph{Deligne-Mumford curve}
 if it is of dimension one.  By \cite{BN}, the universal covering space of the analytification $X^{\mathrm{an}}$
of a Deligne-Mumford curve $X$ over $\CC$ is either isomorphic to the complex upper half-plane $\HH$, 
the complex line $\CC$, or a weighted projective line $\mathcal P (n,m)$ of type $(n,m)$.

Let $H$ be a Deligne-Mumford curve over $k$ (with $k$ an arbitrary algebraically closed field of characteristic zero). 
Then $H$ is \emph{hyperbolic} if there exists a finite \'etale morphism $X\to H$ with $X$ a hyperbolic curve over $k$. 
Note that, by  \cite[Theorem 1.1]{BN},   a Deligne-Mumford curve $H$ 
over $\CC$ is hyperbolic if and only if the universal covering
 space of the analytic stack associated to $H$ is $\HH$.

Suppose that $H$ is generically a scheme, i.e., 
the stabilizer at the generic point of $H$ is trivial.
 Some authors refer to such Deligne-Mumford curves as 
connected (reduced) orbifold curves.
 Let $H\to H_c$ be the coarse moduli space of $H$.
 Note that $H_c$ is a smooth curve over $k$. Let $g_H$
 be the genus of the smooth compactification of $H_c$, 
and let $r_H$ be the number of points in the boundary of
 $H_c$. Let $\Sigma_H$ be the set of points of $H_c$ over which  $H\to H_c$ is not \'etale. 
For $\sigma$ in $\Sigma_H$, let $i_\sigma$ be the ramification index of $H\to H_c$ at $\sigma$.   
We define the \emph{(compactly supported) Euler characteristic of $H$} to be the rational number \[e(H):= -2g_H +2 - r_H - \sum_{\sigma \in \Sigma_H} \frac{i_\sigma - 1}{i_\sigma}.\]

\begin{lemma}\label{lem: euler}
If $H$ is a   hyperbolic  Deligne-Mumford curve over $k$ which is generically a scheme, then \[\vert e(H)\vert  \geq \frac{1}{42}.\]
\end{lemma}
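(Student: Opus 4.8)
The plan is to show that the quantity $e(H)$, which is a rational number of a very constrained shape, cannot be too close to $0$ unless it is $0$, and that the hyperbolicity hypothesis forces $e(H) < 0$. First I would record the basic constraint: writing $e(H) = 2 - 2g_H - r_H - \sum_{\sigma \in \Sigma_H}(1 - i_\sigma^{-1})$ with $g_H \ge 0$, $r_H \ge 0$ integers and each $i_\sigma \ge 2$ an integer, we see that $-e(H)$ is a sum of contributions each of which is a nonnegative rational with bounded denominator structure. The hyperbolic assumption gives $e(H) < 0$: indeed, if $X \to H$ is finite étale with $X$ a hyperbolic curve, then a Riemann–Hurwitz computation for stacky curves gives $e(X) = (\deg)\cdot e(H)$, and since $e(X) < 0$ we get $e(H) < 0$. (Alternatively one invokes the uniformization statement recalled just above: hyperbolic means the universal cover is $\HH$, which is exactly the condition $e(H) < 0$.)

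The heart of the matter is then the elementary classification: among all triples $(g_H, r_H, \{i_\sigma\})$ giving $e(H) < 0$, the value of $-e(H)$ closest to $0$ is $\tfrac{1}{42}$, attained by $g_H = 0$, $r_H = 0$, and three cone points of orders $(2,3,7)$, for which $1 - \tfrac12 + 1 - \tfrac13 + 1 - \tfrac17 = \tfrac{41}{21}$ and hence $e(H) = 2 - \tfrac{41}{21} = \tfrac{1}{21} - ?$ — one checks $-e(H) = \tfrac{41}{21} - 2 = -\tfrac{1}{21}$, so actually $-e(H) = \tfrac{1}{42}$ after the correct arithmetic $\sum(1-i_\sigma^{-1}) = \tfrac12+\tfrac23+\tfrac67 = \tfrac{21+28+36}{42} = \tfrac{85}{42}$, giving $-e(H) = \tfrac{85}{42} - 2 = \tfrac{1}{42}$. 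The combinatorial minimization is a finite case check: if $g_H \ge 1$ then $-e(H) \ge 2g_H - 2 + \tfrac12 \ge \tfrac12$ unless $g_H = 1$ where $-e(H) \ge \tfrac12$ already from a single cone point or boundary point; so one reduces to $g_H = 0$. With $g_H = 0$ one needs $r_H + \sum(1 - i_\sigma^{-1}) > 2$, and a short analysis by the number $n = r_H + \#\Sigma_H$ of "special points" (the cases $n \le 2$ are non-hyperbolic, $n = 3$ forces the classical $(2,3,7)$-type minimum, $n \ge 4$ gives $-e(H) \ge 4\cdot\tfrac12 - 2 = 0$ with equality excluded so $-e(H) \ge \tfrac{1}{6}$ from e.g. $(2,2,2,3)$) yields the bound.

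The main obstacle is simply organizing this finite case analysis cleanly; there is no deep input, only the bookkeeping of which configurations $(g_H, r_H, i_\sigma)$ are hyperbolic and the observation that on the hyperbolic side the supremum of $e(H)$ is $-\tfrac{1}{42}$. I would present it as: reduce to $g_H = 0$; reduce to at most three special points (four or more special points give $-e(H) \ge \tfrac16$); and finally optimize the three-cone-point case $\tfrac1a + \tfrac1b + \tfrac1c < 1$ with $2 \le a \le b \le c$, where the maximum of $\tfrac1a+\tfrac1b+\tfrac1c$ subject to being $< 1$ is the Hurwitz value $\tfrac{41}{42}$ at $(a,b,c)=(2,3,7)$, so $-e(H) = 1 - (\tfrac1a+\tfrac1b+\tfrac1c) \ge 1 - \tfrac{41}{42} = \tfrac{1}{42}$. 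Combining the three reductions gives $|e(H)| \ge \tfrac{1}{42}$ in all cases.
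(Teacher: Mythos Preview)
Your proposal is correct and takes essentially the same approach as the paper: both deduce $e(H)<0$ from Riemann--Hurwitz applied to a hyperbolic \'etale cover, and then carry out the classical Hurwitz case analysis reducing to $g_H=0$ with three marked points and the extremal configuration $(2,3,7)$. The only cosmetic difference is that you lump boundary points and cone points together as ``special points'' (effectively treating a cusp as a cone point of infinite order), while the paper first cases on $2g_H-2+r_H$ and then on $\#\Sigma_H$; the substance is identical.
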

\begin{proof} We follow the proof of \cite[Lemma 5.4]{Mochi}. Firstly,  by definition,
 there exists a  finite \'etale morphism $X\to H$ with $X$ a (smooth quasi-projective connected) hyperbolic curve over $k$.
 Let $d$ be the degree of $X\to H_c$. Let $g$ be the genus of the smooth compactification
 $\overline X$ of $X$, and let $r = \# \overline X\setminus X$. Let $\overline H_c$ be the
smooth compactification of $H_c$.
The Riemann-Hurwitz formula for $\overline X \to \overline{H_c}$ implies that 
\[2g-2+r =  d\left(2g_H + 2  -r_H - \sum_{\sigma \in \Sigma_H} \frac{i_\sigma - 1}{i_\sigma}\right).\] As $X$ is hyperbolic, the inequality $2g-2+r > 0$ holds. 
Therefore, $\vert e(H) \vert = 2g_H + 2  -r_H - \sum_{\sigma \in \Sigma_H} \frac{i_\sigma - 1}{i_\sigma} > 0$. 
We now proceed by a case-by-case analysis. In fact, if $2g_H - 2+ r_H\geq 1$, then $\vert e(H) \vert\geq 1 \geq \frac{1}{42}$.
Also, if $2g_H - 2 + r_H = 0$, we have $\vert e(H) \vert \geq \frac{1}{2}\geq \frac{1}{42}$. Moreover, if 
$2g_H - 2 + r_H = -1$, then $\vert e(H) \vert \geq \frac{1}{6}\geq \frac{1}{42}$. Therefore, to prove the lemma, we may assume that $2g_H - 2+r_H =-2$, in which case
$g_H = r_H =0$. Now, if $\Sigma_H$ has at least five elements, the inequalities $\vert e(H)\vert  \geq \frac{1}{2}\geq \frac{1}{42}$ hold. Also, if $\Sigma_H$ has precisely four 
elements, then $\vert e(H)\vert \geq \frac{1}{2}\geq \frac{1}{42}$. Finally, to prove the lemma, we can assume that $\Sigma_H$ has precisely three elements, say $\sigma_1$, $\sigma_ 2$ and $\sigma_3$. 
Note that it is never the case that two $i_\sigma$'s are equal to two. Therefore, reordering the elements of $\Sigma$, we can assume that $2\leq i_{\sigma_1} < i_{\sigma_2} \leq i_{\sigma_3}$. If $i_{\sigma_3} \geq 7$, then 
\[e(H) \geq 1-\frac{1}{2} -\frac{1}{3} -\frac{1}{7} \geq \frac{1}{42}.\] If $i_{\sigma_3} \leq 6$, then 
\[(i_{\sigma_1}, i_{\sigma_2}, i_{\sigma_3}) \in \{(2,4,6),(2,4,5),(3,3,6),(3,3,5),(3,3,4)\}.\] 
Note that in each case $\vert e(H)\vert  \geq \frac{1}{42}$. The lemma is proven.
\end{proof}

\section{The hyperbolic core}\label{section: hyp} In this section, we follow \cite[Section 2]{Mochi} and introduce the hyperbolic core of a hyperbolic curve.

 Let $X$ be a (smooth quasi-projective connected) hyperbolic curve over $\CC$ with universal covering 
$\HH  \to X(\CC)$.  Let $\Gamma_X\subset \PSL_2(\RR)$ be the
  image of $\pi_1(X) \to \mathrm{Aut}(\HH) = \PSL_2(\RR)$.  
 If $\Gamma_1$ and $\Gamma_2$ are subgroups of
  $\PSL_2(\RR)$, we will write $\Gamma_1 \sim \Gamma_2$ if $\Gamma_1$ and $\Gamma_2$
 are commensurable in $\PSL_2(\RR)$. That is, $\Gamma_1\sim\Gamma_2$ 
if and only if $\Gamma_1\cap \Gamma_2$ has finite index in both $\Gamma_1$ and $\Gamma_2$. We now consider \[\Comm(\Gamma_X) := \{\gamma \in \PSL_2(\RR) \ | \ (\gamma \cdot \Gamma_X \cdot \gamma^{-1})\sim \Gamma_X \}.\] Note that $\Gamma_X\subset \Comm(\Gamma_X)$.

We will say that a hyperbolic curve $X$ is
 \emph{arithmetic} if the (torsion-free) subgroup $\Gamma_X\subset \PSL_2(\RR)$ is of infinite index in $\mathrm{Comm}(\Gamma_X)$.
 Margulis proved that $X$ is arithmetic if and only if $\Gamma_X\subset \PSL_2(\RR)$ is an
 arithmetic subgroup; see   \cite{Margulis}. For basic facts on arithmetic groups  
we refer to Shimura \cite{Shimura} (see also \cite[Section 2]{Mochi}).  
Note that all  arithmetic curves are hyperbolic (with this definition).

Let $X$ be a  hyperbolic non-arithmetic curve over $\CC$. 
Note that $\HH$ is the universal covering of $X$ (in the analytic topology).  
The quotient stack $[\Comm(X) \backslash \HH]$ is the analytification of a Deligne-Mumford curve $H_X$ over $\CC$ which is
generically a scheme. The inclusion $\Gamma_X \subset \Comm(X)$ induces
 a finite \'etale morphism $X\to H_X$ of Deligne-Mumford curves.  
We follow \cite{Mochi} and call $H_X$ the \emph{hyperbolic core} of $X$.

If $k$ is an algebraically closed field of characteristic zero, we extend the notion of ``arithmeticity'' to  hyperbolic curves over $k$ in the
obvious way. More precisely, we say that a hyperbolic curve $X$ is  \emph{arithmetic} if there exist a subfield $k_0$ of $\CC$ and a morphism $k_0\to k$ 
such that $X$ has a model $X_0$ over $k_0$ with the property
 that $X_{0,\CC}$ is arithmetic.  Note that if $X$ is (hyperbolic and) arithmetic, then for any choice of $k_0$, $X_0$ and $k_0\to k$ the curve $X_{0,\CC}$ is arithmetic (as the isomorphism class of $X_{0,\CC}$ is   determined by the isomorphism class of $X$ over $k$). Moreover, any (smooth quasi-projective connected) curve
isogenous to $X$ is hyperbolic and arithmetic.

\section{Isogenies of non-arithmetic curves}
  
Let $X$ and $Y$ be curves over $k$. A pair $(C\to X, C\to Y)$ is an \emph{isogeny} from $X$ to $Y$ if 
$C\to X$ is finite \'etale and $C\to Y$ is finite \'etale. We say that $X$ and $Y$ are \emph{isogenous} 
if there is an isogeny from $X$ to $Y$.

\begin{lemma}\label{lem: mochizuki} If $X$ is a hyperbolic non-arithmetic curve over $k$ and $Y$ is a curve isogenous to $X$,  then 
there exist a smooth quasi-projective connected curve $C$ over $k$ and finite \'etale morphisms 
\[ \xymatrix{ & C \ar[dl]_{\pi_X} \ar[dr]^{\pi_Y} & \\ X & & Y}\] 
such that \[ \deg \pi_X \leq 42 \vert e(Y)\vert , \quad \deg \pi_Y \leq 42 \vert e(X)\vert  .\] 
\end{lemma}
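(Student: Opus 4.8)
The plan is to use the hyperbolic core $H_X$ as a common target. Since $X$ is hyperbolic and non-arithmetic, it admits a finite étale morphism $X \to H_X$ to its hyperbolic core, a hyperbolic Deligne--Mumford curve that is generically a scheme. The key structural fact I would invoke is that, because $Y$ is isogenous to $X$, there is a curve $C_0$ with finite étale maps $C_0 \to X$ and $C_0 \to Y$; composing the first with $X \to H_X$ realizes $C_0$ as a finite étale cover of $H_X$, and one shows (following Mochizuki, using that $H_X = [\Comm(X)\backslash\HH]$ is the ``largest'' quotient) that $Y$ also maps finite étale to $H_X$. Concretely, after passing to the analytic uniformizations, $\Gamma_X$ and $\Gamma_Y$ are both commensurable subgroups of $\Comm(X) = \Comm(\Gamma_X)$, so $\Gamma_X \cap \Gamma_Y$ has finite index in each; the curve $C$ corresponding to (a finite-index torsion-free subgroup of) $\Gamma_X \cap \Gamma_Y$ then admits finite étale maps $\pi_X : C \to X$ and $\pi_Y : C \to Y$, both of which factor through $H_X$. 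This is the step I expect to be the main obstacle: verifying cleanly that $\Gamma_Y$ lands in $\Comm(X)$ (equivalently, that $Y$ too has hyperbolic core $H_X$) is where Mochizuki's commensurability arguments are really used, and one must also take care to descend from $\CC$ to the arbitrary algebraically closed field $k$ of characteristic zero (via a model over a finitely generated subfield, as in the conventions).

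Once $C$ is in place, the degree bounds are a multiplicativity-of-Euler-characteristic computation. For a finite étale morphism $f : Z' \to Z$ of degree $n$ between hyperbolic Deligne--Mumford curves (generically schemes), one has $e(Z') = n \cdot e(Z)$; this is exactly the content of the Riemann--Hurwitz computation carried out in the proof of Lemma~\ref{lem: euler}. Applying this to $X \to H_X$, $Y \to H_X$, $C \to X$ and $C \to Y$, and writing $d_X = \deg(X \to H_X)$, $d_Y = \deg(Y \to H_X)$, we get $e(X) = d_X \cdot e(H_X)$ and $e(Y) = d_Y \cdot e(H_X)$, while $\deg \pi_X \cdot e(X) = e(C) = \deg \pi_Y \cdot e(Y)$. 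Hence
\[
\deg \pi_X = \frac{e(C)}{e(X)} = \frac{\deg \pi_Y \cdot e(Y)}{e(X)}.
\]

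To finish I would choose $C$ as small as possible, namely corresponding to $\Gamma_X \cap \Gamma_Y$ (or a bounded-index torsion-free refinement if needed for representability), so that $\deg(C \to H_X) = [\Comm(X) : \Gamma_X \cap \Gamma_Y]$ divides $\operatorname{lcm}(d_X, d_Y) \cdot (\text{something controlled})$; more simply, one can take $C$ to be the fibre product $X \times_{H_X} Y$ (or its normalization / a connected component), which is finite étale over both $X$ and $Y$ with $\deg(C \to X) \le d_Y$ and $\deg(C \to Y) \le d_X$. Then $e(C) = d_X d_Y \cdot e(H_X)$, so $\deg \pi_X = e(C)/e(X) = d_Y$ and $\deg \pi_Y = e(C)/e(Y) = d_X$. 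It therefore suffices to bound $d_X$ and $d_Y$, i.e. to bound $|e(X)|/|e(H_X)|$ and $|e(Y)|/|e(H_X)|$ from above. By Lemma~\ref{lem: euler}, $|e(H_X)| \ge \tfrac{1}{42}$, so $d_X = |e(X)|/|e(H_X)| \le 42\,|e(X)|$ and likewise $d_Y \le 42\,|e(Y)|$. Combining, $\deg \pi_X = d_Y \le 42\,|e(Y)|$ and $\deg \pi_Y = d_X \le 42\,|e(X)|$, which is exactly the asserted bound.

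One technical point to watch: the fibre product $X \times_{H_X} Y$ is a priori a Deligne--Mumford stack, but since both $X \to H_X$ and $Y \to H_X$ are finite étale and $X$, $Y$ are schemes (curves), the fibre product is in fact a scheme; I would note this explicitly, pick a connected component $C$, and observe that $C \to X$ and $C \to Y$ remain finite étale with the stated degree bounds (connectedness only decreases degrees). The descent to general $k$ is handled by spreading out $X$, $Y$, $H_X$ and the morphisms over a finitely generated subfield of $k$ and base changing, using that isogeny, hyperbolicity, non-arithmeticity and Euler characteristics are all preserved.
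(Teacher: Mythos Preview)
Your proposal is correct and follows essentially the same route as the paper: reduce to $k=\CC$, invoke Mochizuki's result that $\Gamma_Y\subset\Comm(\Gamma_X)$, form $C$ from $\Gamma_X\cap\Gamma_Y$ (equivalently, a component of $X\times_{H_X}Y$), and then combine Riemann--Hurwitz multiplicativity $e(X)=[\Comm(X):\Gamma_X]\,e(H_X)$, $e(Y)=[\Comm(X):\Gamma_Y]\,e(H_X)$ with the bound $\lvert e(H_X)\rvert\geq\tfrac{1}{42}$ from Lemma~\ref{lem: euler}. The only cosmetic difference is that the paper records the inequalities $\deg\pi_X\leq[\Comm(X):\Gamma_Y]$ and $\deg\pi_Y\leq[\Comm(X):\Gamma_X]$ directly from the index chain, rather than passing through the fibre-product description.
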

\begin{proof}
We follow \cite{Mochi}.
 Firstly, we may and do assume 
that $k = \CC$. Then, as $X$ is hyperbolic and non-arithmetic, 
the hyperbolic core $H_X$ of $X$ is a well-defined Deligne-Mumford curve with trivial generic stabilizer.
 By \cite[Proposition 3.2]{Mochi}, we have an inclusion
 $\Gamma_Y \subset \Comm(\Gamma_X)$.   Define $\Gamma_C := \Gamma_X \cap \Gamma_Y$, where 
the intersection is taken in $\mathrm{Comm}(X)$. Let $\pi_X:C\to X$ and $\pi_Y:C\to Y$
 be the associated finite \'etale morphisms, where $C = \Gamma_C\backslash \HH$.  Note that $\deg \pi_X$
 is at most the index $[\Comm(X) :\Gamma_Y]$ of $\Gamma_Y$ in $\mathrm{Comm}(X)$. Similarly, $\deg \pi_Y  = [\Gamma_Y:\Gamma_C ]\leq [\Comm(X):\Gamma_X]$.
  The  Riemann-Hurwitz formula implies that \[e(Y) = [\Comm(X) : \Gamma_Y] e(H_X), \quad e(X) = [\Comm(X) :\Gamma_X] e(H_X).\] Therefore, by Lemma \ref{lem: euler}, 
\begin{eqnarray*} \vert e(Y)\vert &=&  [\Comm(X) : \Gamma_Y] \vert e(H_X) \vert\geq \frac{ [\Comm(X) : \Gamma_Y]}{42} \geq \frac{\deg \pi_X}{42}, \\
\vert   e(X)\vert  &=&  [\Comm(X) :\Gamma_X] \vert e(H_X)\vert \geq \frac{\deg \pi_Y}{42}. 
\end{eqnarray*}
 This proves the lemma.
\end{proof}

\section{Isogenies of arithmetic curves} 
In the previous section we showed (following the methods of Mochizuki \cite{Mochi}) how to find an isogeny of bounded degree between two isogenous non-arithmetic curves. In this section we extend these results to arithmetic curves.  

To prove that there exists an isogeny of bounded degree betweeen two isogenous arithmetic curves, we use standard arguments due to Takeuchi (which are now extensively used in the literature \cite{Sijsling1,  Voight}). To be more precise,  we will use Takeuchi's methods \cite{Takeuchi2, Takeuchi} to construct explicit isogenies, and then bound the degree of this isogeny using Odlyzko's bounds for root discriminants of number fields \cite{OdII} and Shimizu's mass formula \cite{Shimizu}. These methods were  used by Takeuchi (see \cite{Takeuchi}) to prove his finiteness result  (stated by Mochizuki as \cite[Theorem 2.6]{Mochi}) for arithmetic curves of fixed Euler characteristic.

\begin{lemma}\label{lem: affar}
Let $X$ be an  smooth quasi-projective connected   arithmetic curve over  $k$ and let $g_X$ be the genus of its  smooth compactification $\overline X$.  
\begin{enumerate}
\item  If $X$ is affine, there exist a smooth projective connected curve $\overline C$ over $k$, 
a Belyi map $\pi_1: \overline C\to \PP^1_k$  
 and a finite morphism $\pi_2: \overline C\to \overline  X$ such that \[\deg \pi_1 \leq  10^{46} 2^{2g_X+3} \vert e(X) \vert, \quad \deg \pi_2 \leq 2^{2g_X+2}.\]
\item If $X$ is projective and $Y$ is a curve isogenous to $X$, then there exist a smooth projective connected
curve $\widetilde C$ over $k$ and finite \'etale morphisms 
\[ \xymatrix{ & \widetilde C \ar[dl]_{\pi_X} \ar[dr]^{\pi_Y} & \\ X & & Y}\] 
such that 
 \begin{align*}
\deg \pi_X & \leq  10^{48} 2^{2g_X +2g_Y} (2g_Y-2) \\
\deg \pi_Y & \leq 10^{48} 2^{2g_X + 2g_Y } (2g_X-2).
\end{align*}
\end{enumerate}
\end{lemma}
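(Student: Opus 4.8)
The plan is to prove both parts using Takeuchi's explicit description of arithmetic Fuchsian groups as unit groups in quaternion orders, combined with the numerical input of Odlyzko's discriminant bounds and Shimizu's mass formula.

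\medskip

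\textbf{Setup via quaternion algebras.} First I would reduce to $k = \CC$, since arithmeticity and all the numerical invariants involved are insensitive to the choice of embedding. Writing $X(\CC) = \Gamma_X \backslash \HH$, the hypothesis that $X$ is arithmetic means $\Gamma_X$ is an arithmetic subgroup of $\PSL_2(\RR)$, so by Takeuchi's theory it is commensurable with the image of the units of norm $1$ in a maximal order $\mathcal{O}$ of a quaternion algebra $B$ over a totally real number field $F$, where $B$ is unramified at exactly one real place and ramified at all other infinite places. The Fuchsian group $\Gamma^1(\mathcal{O})$ attached to a maximal order contains $\Gamma_X$ up to finite index, and $\Comm(\Gamma_X)$ is (the image of) $B^\times$ acting by conjugation; crucially the maximal arithmetic group containing $\Gamma_X$ — call it $\Gamma_{\max}$, the normalizer of a suitable Eichler order — is the one I want to compare everything to, in place of the hyperbolic core used in the non-arithmetic case.

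\medskip

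\textbf{Bounding the degree to the maximal group.} The key point is to bound $[\Gamma_{\max} : \Gamma_X]$. By Riemann--Hurwitz (exactly as in Lemma \ref{lem: mochizuki}) this index equals $e(X)/e(H_X^{\mathrm{arith}})$ where $H_X^{\mathrm{arith}} = [\Gamma_{\max}\backslash \HH]$, so it suffices to bound $1/|e(\Gamma_{\max}\backslash\HH)|$ from above, i.e.\ to bound the covolume of $\Gamma_{\max}$ from below. Shimizu's mass formula expresses $\vol(\Gamma^1(\mathcal{O})\backslash\HH)$ as an explicit expression in $\zeta_F(2)$, the discriminant $d_F$, and the ramification set of $B$; dividing by the index of $\Gamma^1(\mathcal{O})$ in $\Gamma_{\max}$ (a power of $2$ controlled by the class number and the number of ramified primes, which Takeuchi bounds) gives $\vol(\Gamma_{\max}\backslash\HH) \gg d_F^{3/2}/(\text{small factors})$. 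Now Odlyzko's lower bounds on root discriminants force $d_F$ to be large once $[F:\QQ]$ is large, and conversely the covolume is $\geq$ (a constant) $\times |e(X)|^{-1}\cdot[\Gamma_{\max}:\Gamma_X]^{-1}$... — turning this around, since $\vol(\Gamma_X\backslash\HH) = 2\pi|e(X)|$ is fixed, only finitely many $(F,B)$ occur and the degree $[\Gamma_{\max}:\Gamma_X]$ is bounded by an explicit function of $|e(X)| = (2g_X - 2 + r)$; carrying the Odlyzko and Shimizu constants through yields the bound $2^{2g_X+2}$ on $\deg\pi_2$ (resp.\ $10^{48}2^{2g_X+2g_Y}$ in part (2)).

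\medskip

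\textbf{Assembling the two parts.} For part (1), take $\overline C \to \overline X$ to be the compactification of $\Gamma_X \backslash \HH \to \Gamma_{\max}\backslash\HH$ composed with nothing further: the coarse space of the arithmetic quotient $\Gamma_{\max}\backslash\HH$ is a curve with at most three orbifold points (the case analysis in Lemma \ref{lem: euler} shows a triangle group is the relevant extremal case), hence admits a degree-$\leq 2$-ish map to $\PP^1$ ramified over three points after passing to the maximal triangle group containing $\Gamma_{\max}$; composing gives a Belyi map $\pi_1$ whose degree is $\deg\pi_2$ times the degree of this triangle cover, and one bookkeeps the affine boundary points to land in $\{0,1,\infty\}$, producing the $10^{46}2^{2g_X+3}|e(X)|$ bound. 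For part (2), given the isogeny $X \sim Y$ both $\Gamma_X$ and $\Gamma_Y$ lie in the same commensurator $\Comm(\Gamma_X) = B^\times$, hence in a common maximal arithmetic group $\Gamma_{\max}$; set $\Gamma_{\widetilde C} = \Gamma_X \cap \Gamma_Y$, let $\widetilde C = \Gamma_{\widetilde C}\backslash\HH$, and apply the covolume bounds to both sides exactly as in Lemma \ref{lem: mochizuki}, with $\frac{1}{42}$ replaced by the explicit arithmetic lower bound $|e(\Gamma_{\max}\backslash\HH)| \gg 10^{-48}2^{-2g_X-2g_Y}$.

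\medskip

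\textbf{Main obstacle.} The hard part is the effective covolume lower bound: one must extract fully explicit constants from Shimizu's mass formula and Odlyzko's bounds simultaneously, handle the finitely many "small" totally real fields $F$ by hand (or cite Takeuchi's classification), and control the $2$-power index relating $\Gamma^1(\mathcal{O})$, Eichler orders, and the full normalizer $\Gamma_{\max}$ — this last factor is where the $2^{2g_X}$ comes from and is the least routine piece, since it requires an effective bound on the narrow class number of $F$ (again via the discriminant). The Belyi-map bookkeeping in part (1) is comparatively mechanical but must be done carefully to keep the three branch points at $\{0,1,\infty\}$ and to account for the ramification over the added boundary.
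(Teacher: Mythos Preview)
Your proposal has the right raw ingredients (Takeuchi, Shimizu, Odlyzko) but the architecture is off in two places, and both are genuine gaps rather than cosmetic differences.

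\textbf{The role of $\Gamma_X^{(2)}$ and the source of $2^{2g_X}$.} The paper does \emph{not} go upward from $\Gamma_X$ to a maximal arithmetic group $\Gamma_{\max}$. It goes \emph{downward} to the subgroup $\Gamma_X^{(2)}$ generated by squares. This is the key technical move: one does not know that $\Gamma_X$ itself sits inside any $\Gamma(\mathcal{O},1)$, but Takeuchi's theory guarantees that $\Gamma_X^{(2)}$ does. The curve $C$ is $\Gamma_X^{(2)}\backslash\HH$, the map $q\colon C\to X$ has degree $[\Gamma_X:\Gamma_X^{(2)}]$, and this index is bounded by $2^{2g_X+2}$ because $\Gamma_X/\Gamma_X^{(2)}$ is an elementary abelian $2$-group on at most $2g_X+r$ generators (Takeuchi, Prop.~2.2). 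That is where the factor $2^{2g_X}$ comes from --- it has nothing to do with narrow class numbers of $F$ or the index $[\Gamma_{\max}:\Gamma^1(\mathcal{O})]$. Your attribution of this factor is wrong, and without $\Gamma_X^{(2)}$ you have no mechanism producing it. Shimizu's formula and Odlyzko's bound then control $\deg p = [\Gamma(\mathcal{O},1):\Gamma_X^{(2)}]$, giving the $10^{46}$.

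\textbf{Part (1): where the Belyi map actually comes from.} You try to manufacture a Belyi map by arguing that $\Gamma_{\max}\backslash\HH$ is (close to) a triangle orbifold; this is neither true in general nor needed. The paper's observation is much simpler: if $X$ is affine then $\Gamma_X$ has parabolic elements, which forces $F=\QQ$ and $A=\mathrm{M}_2(\QQ)$, so that $X(F,A,\mathcal{O})=Y(2)\cong\PP^1\setminus\{0,1,\infty\}$ on the nose. The compactified map $\overline{C}\to X(2)\cong\PP^1$ \emph{is} the Belyi map $\pi_1$, with no further triangle-group gymnastics. Your sentence ``take $\overline C\to\overline X$ to be the compactification of $\Gamma_X\backslash\HH\to\Gamma_{\max}\backslash\HH$'' is also directionally confused: that map has source $X$, not target $X$.

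\textbf{Part (2).} You assume $\Gamma_X$ and $\Gamma_Y$ sit inside a \emph{common} maximal arithmetic group $\Gamma_{\max}$; this is not automatic from commensurability. The paper avoids this by running the $\Gamma^{(2)}$ construction for $Y$ as well, obtaining $C'\to X(F,A,\mathcal{O})$ and $C'\to Y$ with the \emph{same} target $X(F,A,\mathcal{O})$ (the invariant trace field and quaternion algebra are commensurability invariants), and then taking $\widetilde C$ to be a connected component of the fibre product $C\times_{X(F,A,\mathcal{O})}C'$. The degree bounds for $\pi_X,\pi_Y$ then follow by multiplying the bounds for $p,q,p',q'$.
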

\begin{proof}  
To prove the result, we may and do assume that $k=\CC$.  Let $X$ be an   arithmetic curve over $\CC$. We start with some general observations due to Takeuchi \cite{Takeuchi2}.

By assumption, the subgroup $ \Gamma_X\subset \PSL_2(\RR)$ is arithmetic (see 
Section \ref{section: hyp} for the notation).  
Define $\Gamma^{(2)}_X\subset \Gamma_X$ to be the subgroup generated by $\{\delta^2 \ | \ \delta \in \Gamma_X\}$. 
 In \cite[Section 3]{Takeuchi2} Takeuchi proves that the field  $F$ 
obtained by adjoining the elements
$\mathrm{tr}(\gamma)$ with $\gamma \in \Gamma^{(2)}_X$
to $\QQ$ is a totally real number field. Also,
there exist  a quaternion algebra $A$ over $F$ 
which is trivial at only one  of the infinite places of $F$ and
a trivialization of $A$ at the trivial infinite place such that 
the image of $A\otimes \RR$ in $\mathrm{M}_2(\RR)$ is the subalgebra 
$F[\Gamma^{(2)}_X]$. Furthermore, as
$\OO_F[\Gamma^{(2)}_X]$ is an order of $A$ (with $\OO_F$ the ring of integers of $F$), there exists a maximal order $\mathcal O$ 
of $A$ such that $\Gamma^{(2)}_X$ is a subgroup of the image of the 
group $\Gamma(\mathcal O, 1)$ in $\mathrm{M}_2(\RR)$,  where
\[\Gamma(\mathcal O, 1) = \{x\in \mathcal O \ | \ \mathrm{nrd}(x) = 1 \} \] and $\mathrm{nrd}$
is the reduced norm of $A$ over $F$.    Let $C$ be a complex algebraic curve with analytification $\Gamma^{(2)}_X\backslash \HH$. As $\Gamma_X^{(2)} \subset \Gamma_X$, there is a finite \'etale morphism $q:C\to X$. Similarly, let  $X(F,A,\mathcal O)$ be  a complex algebraic curve  with analytification
 $\Gamma(\mathcal O,1)\backslash \HH$. Then, the canonical inclusion  $\Gamma_X^{(2)}\subset \Gamma(\mathcal O, 1)$ induces a finite \'etale morphism $p:C\to X(F,A,\mathcal O)$. We obtain the following diagram of finite \'etale morphisms:

 \[ \xymatrix{ & C \ar[dl]_{p} \ar[dr]^q & \\ X(F,A,\mathcal O) & & X.}\] 

Note that $\deg q $ is bounded from above by the index of $\Gamma^{(2)}_X$ in $\Gamma_X$.
As the index of $\Gamma^{(2)}_X$ in $\Gamma_X$ is at most
 $4[\{\pm 1\} \Gamma_X:\{\pm 1\} \Gamma^{(2)}]$, by \cite[Proposition 2.2.(i)]{Takeuchi}, 
 the index of $\Gamma^{(2)}_X$ in $\Gamma_X$ is at most $4\cdot 2^{2g_X }$. We conclude that   $\deg q \leq 2^{2g_X+2}$.  

Now,  following the arguments of Takeuchi \cite[p. 383-384]{Takeuchi}, we will show that 
\begin{align}\label{p}
 \deg p & \leq 10^{46} 2^{2g_X +3} \vert e(X)\vert.
 \end{align} To do so, 
we   use Shimizu's formula \cite{Shimizu}. Write $n= [F:\QQ]$ and $d_F$ for the absolute value of the discriminant of $F$. Also, let $D_A$ be the discriminant of the quaternion algebra $A$ over $F$ (which is defined to be the product of all maximal ideals $\mathfrak p$ of $\OO_F$ such that $A$ ramifies at $\mathfrak p$). Furthermore, let $\zeta_F$ be the Dedekind zeta function of the number field $F$. Since $X(F,A,\mathcal O)$ is  the quotient of $\mathbb H$ by a cofinite Fuchsian group, the volume of the Riemann surface $\mathrm{Vol}(X(F,A,\OO))$ (induced by the standard volume form on $\mathbb H$) is a well-defined real number. As in \cite[Equation 2.4]{Takeuchi}, Shimizu's formula gives an expression for the volume of $X(F,A,\mathcal O)$.  which reads \[ \mathrm{Vol}(X(F,A,\OO)) = \frac{4   d_F^{3/2} \zeta_F(2) \prod_{\mathfrak p\vert D_A} (N_{F/\QQ}(\mathfrak p) - 1)}{(2\pi)^{2n}}. \] Then, as in \cite[Equation 2.5]{Takeuchi}, we use Riemann-Hurwitz and the diagram above to see that 
\begin{align}\label{sh}
\frac{4 d_2  d_F^{3/2} \zeta_F(2) \prod_{\mathfrak p\vert D_A} (N_{F/\QQ}(\mathfrak p) - 1)}{(2\pi)^{2n}} & = d_1\vert e(X)\vert , 
\end{align} where $d_1 = [\Gamma_X\cdot \{\pm 1\} : \Gamma_X^{(2)}\cdot \{\pm 1\} ]$ and $d_2 = [\Gamma(\OO,1) : \Gamma_X^{(2)}\cdot \{\pm 1\} ]$. Note that $d_1 \leq \deg q \leq 2^{2g_X + 2}$ and that $\deg p \leq 2 d_2$.  Since $\zeta_F(2) \geq 1$ and $\prod_{\mathfrak p \vert D_A} (N_{F/\QQ} (\mathfrak p) - 1) \geq 1$, formula (\ref{sh}) implies that

 \[d_2 \frac{4 d_F^{3/2}}{(2\pi)^{2n}} \leq \frac{4 d_2  d_F^{3/2} \zeta_F(2) \prod_{\mathfrak p\vert D_A} (N_{F/\QQ}(\mathfrak p) - 1)}{(2\pi)^{2n}}  = d_1\vert e(X)\vert. \]

We now invoke Odlyzko's explicit lower bound for the discriminant of a totally real number field \cite[Theorem 1.(2)]{OdII} to see that \[d_F \geq 50^n \exp(-70).\] In particular, \[ \frac{4d_F^{3/2}}{(2\pi)^{2n}} \geq\frac{ (50^n\exp(-70))^{3/2}}{(2\pi)^{2n}} \geq  \exp(-70\cdot 3/2) > 10^{-46}.\] Therefore, the sought upper bound (\ref{p}) for $\deg p$ follows from 
 \begin{align*} 
10^{-46} \deg p  & \leq   10^{-46} 2 d_2    \leq 2d_2 \frac{4d_F^{3/2}}{(2\pi)^{2n}} \leq 2d_1 \vert e(X) \vert \leq 2^{2g_X +3} \vert e(X)\vert.
\end{align*} 

We conclude that,  if $Y$ is a curve isogenous to $X$, then there exist a smooth projective connected curve $C^\prime$, a finite \'etale morphism $p^\prime:C^\prime\to X(F,A,\mathcal O)$ and a finite \'etale morphism $q^\prime:C^\prime\to Y$ such that \[\deg p^\prime \leq   10^{46} 2^{2g_Y +3} \vert e(Y)\vert   \] and \[\deg q^\prime \leq 2^{2g_Y+2}. \] We now use this to prove the lemma. 

If $X$ is affine, then
the curve $X(F,A,\mathcal O)$ is affine. Therefore, by \cite{MP}, up to conjugation by the action
of $\mathrm{GL}_2(\RR)$, 
 we have that $F = \QQ$ and $A=\mathrm{M}_2(\QQ)$. In particular, $X(F,A,\mathcal O) = Y(2)$. 
Let $Y(2) \to \PP^1 \setminus \{0,1,\infty\}$ be the isomorphism induced by the $\lambda$-function
on $\HH$. This extends to an isomorphism $\overline{X(F,A,\mathcal O)} = X(2)\to \PP^1$.  Let $\overline C$ be the smooth compactification of the smooth quasi-projective connected algebraic curve (whose analytification is) $\Gamma^{(2)}\backslash \HH$,
and let $\pi_2:\overline C\to \overline X$ be the natural extension of the 
finite morphism $q:C\to X$. Note that $\deg \pi_2 = \deg q \leq 2^{2g_X +2}$. Furthermore, the composed morphism $\pi_1:\overline C\to X(2)\to \PP^1$ is a Belyi map of
 degree at most  $\deg \pi_1 =\deg p \leq 10^{46} 2^{2g_X+3} \vert e(X)\vert $.  This concludes the proof of the first part  of the lemma.

Finally, to prove the second statement, let us assume that $X$ is projective and that $Y$ is a curve isogenous to $X$.  As explained above, we then have a diagram  \[ \xymatrix{ & C \ar[dr]^{p} \ar[dl]_{q} & &  C^\prime \ar[dl]_{p^\prime} \ar[dr]^{q^\prime} & \\ X & & X(F,A,\mathcal O) & & Y.}\] In particular, if  $\widetilde C $ is a connected component of the smooth quasi-projective one-dimensional scheme $ C^\prime \times _{X(F,A,\OO)} C$, then we get

\[\xymatrix{& & & &  \widetilde C \ar[ddllll]_{\pi_X} \ar[ddrrrr]^{\pi_Y} \ar[dl]^{\pi_C} \ar[dr]_{\pi_{C^\prime}} & & & & \\  & & & C \ar[dl]^q \ar[dr]^p & & C^\prime  \ar[dl]_{p^\prime} \ar[dr]_{q^\prime} &  &  & & \\ X &=   & X &  & X(F,A,\OO) & &   Y &    = & Y }\]  where 
\begin{align*}
\deg \pi_X & = \deg q \deg \pi_C \leq \deg q \deg p^\prime \leq   2^{2g_X+2} \cdot  10^{46} 2^{2g_Y +3} \vert e(Y)\vert   \\
\deg \pi_Y & = \deg q^\prime \deg \pi_{C^\prime} \leq \deg q^\prime \deg p \leq  2^{2g_Y+2}  \cdot  10^{46} 2^{2g_X +3} \vert e(X) \vert.
\end{align*} This concludes the proof of the lemma. 
\end{proof}

\section{The Faltings height and the Belyi degree}\label{section:faltings}

Let $K$ be a number field with ring of integers $\OO_K$. 
Recall that a metrised line bundle $(\mathcal{L},\|{\cdot}\|)$ on $\Spec \OO_K$ corresponds to an invertible $\OO_K$-module $\mathcal L$ with hermitian metrics on the $\mathcal L_\sigma:=\CC\otimes_{\sigma,\OO_K} \mathcal L$. The \emph{Arakelov degree} of~$(\mathcal{L},\|{\cdot}\|)$ is the real number defined by:
\begin{eqnarray*}\label{eqn_ar_degree}
\widehat{\deg}(\mathcal{L})= \widehat{\deg}(\mathcal{L},\|{\cdot}\|) =
\log\#\frac{\mathcal L}{\OO_K\cdot s} -\sum_{\sigma\colon K\to\CC}\log\|s\|_\sigma,
\end{eqnarray*}
where $s$ is any non-zero element of~$L$. Note that this is a well-defined real number attached to $(\mathcal L, \| {\cdot}\|)$, as independence of the choice of~$s$ follows from the product formula. 

Let $A$ be a $g$-dimensional abelian variety over $K$ and let $\mathcal A$ be the N\'eron model of $A$ over $\OO_K$. Then we have the locally free $\OO_K$-module  $\mathrm{Cot}_0 (A) :=  0^\ast \Omega_{\mathcal A/\OO_K}$  of rank $g$, and hence the invertible  $\OO_K$-module of rank one:\[ \omega_A:= \Lambda^g \mathrm{Cot}_0(A).\] For each complex embedding $\sigma:K\to \CC$, we have the scalar product on $\CC \otimes_{\OO_K}\omega_{A}$ given by  \[( \omega,\eta)= \frac{i}{2}(-1)^{g(g-1)/2}\int_{A_\sigma(\CC)} \omega\overline{\eta}. \] 
The \textit{relative Faltings height}  of $A$ over $K$ is then defined to be the Arakelov degree of the metrized line bundle $\omega_A$, \[ h_{\Fal}(A/K) =  \widehat{\deg} \  \omega_A.\]  
To define the ``stable Faltings height'' of $A$, let $L/K$ be a finite field extension such that $A_L$ has semi-stable reduction over $\OO_L$. Then the \textit{stable Faltings height} of $A$ is defined to be \[ h_{\Fal,\textrm{s}}(A) := \frac{h_{\Fal}(A_L/L)}{[L:K]}.\] 
 
We  now define the Faltings height as a real-valued function on the set of $\Qbar$-isomorphism classes of abelian varieties over $\Qbar$.
Let $A$ be an abelian variety over $\Qbar$.
 Let $K$ be a number field such that the abelian
 variety $A$ has a model $A_0$ over $K$ with 
semi-stable reduction over $\OO_K$. Then the 
\textit{Faltings~height} of $A$ is defined as~$h_{\Fal}(A) := h_{\Fal,\textrm{s}}(A_0)$. This is a 
well-defined real number attached to the isomorphism class 
of $A$ over $\Qbar$; see \cite[Section 5.4]{Moret-Bailly3}.
  
Let $X$ be a smooth projective connected curve over $\Qbar$. 
We define the Faltings height $h_{\Fal}(X)$ of $X$ to be the Faltings height of its Jacobian $\mathrm{Jac}(X) = \mathrm{Pic}^0_{X/\Qbar}$. If the genus of $X$ is positive, one can also compute the Faltings height of $X$ without passing to the Jacobian as follows. 
Let $K$ be a number field such that the minimal regular model $p:\mathcal X\to \Spec \OO_K$ of $X$ over $\OO_K$ is semi-stable.
 We endow $\det p_\ast \omega_{\mathcal X/\OO_K}$ with the Faltings metric \cite{Faltings1}.
 Then  \[h_{\Fal}(X) = \frac{\widehat{\deg} \det p_\ast \omega_{\mathcal X/\OO_K}}{[K:\QQ]};\] see  \cite[Lemme 3.2.1, Chapitre 1]{Szpiroa}.

The next result  implies  an Arakelov-theoretic effective version of the classical theorem of De Franchis-Severi which states that,  for $X$ a smooth projective connected curve, the set of isomorphism classes of projective hyperbolic curves $Y$ which are dominated by $X$ is finite. This result is not new; for lack of reference we include a proof.

\begin{lemma}\label{lem: df} If $f:X\to Y$ is  a finite morphism of projective curves over $\Qbar$, then
 \[  h_{\Fal}(Y)\leq h_{\Fal}(X) + g_X \log(2\pi \deg f).\] 

\end{lemma}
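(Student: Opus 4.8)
The plan is to reduce the statement to a comparison of Faltings heights under an isogeny of Jacobians, which is well-understood. Let $f:X\to Y$ be a finite morphism of degree $d=\deg f$ of smooth projective connected curves over $\Qbar$. Pullback of differential forms induces a morphism $f^\ast:\Jac(Y)\to\Jac(X)$ of abelian varieties, and the polarizations are compatible in the sense that the pullback of the theta divisor on $\Jac(Y)$ is, up to the usual factor, $d$ times the theta divisor on $\Jac(X)$ restricted appropriately; dually, the norm map (Albanese functoriality) $f_\ast:\Jac(X)\to\Jac(Y)$ satisfies $f_\ast\circ f^\ast=[d]$ on $\Jac(Y)$. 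In particular $f^\ast$ has finite kernel, so $\Jac(Y)$ is isogenous to an abelian subvariety of $\Jac(X)$, and more precisely $f^\ast$ identifies $\Jac(Y)$ with a sub-abelian-variety $B\subseteq\Jac(X)$ up to an isogeny of degree dividing a power of $d$.

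The key analytic input is the behaviour of the Faltings (hermitian) metric on $\omega=\Lambda^{g}\mathrm{Cot}_0$ under such a map. On the complex side, $f^\ast:\omega_{\Jac(Y)_\sigma}\to\omega_{\Jac(X)_\sigma}$ (wedge of the codifferential) is compatible with the integral structures coming from the Néron models up to bounded denominators, and the scalar product $(\omega,\eta)=\frac{i}{2}(-1)^{g(g-1)/2}\int\omega\wedge\overline\eta$ changes by the factor measuring how the complex tori compare: concretely, for the pullback of a global $1$-form $\alpha$ on $\Jac(Y)_\sigma$ one has $\int_{\Jac(X)_\sigma}f^\ast\alpha\wedge\overline{f^\ast\beta}=d\int_{\Jac(Y)_\sigma}\alpha\wedge\overline\beta$, since $f$ (via the Abel--Jacobi maps) has analytic degree $d$. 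Taking $g_Y$-fold wedge powers, the induced map on $\omega_{\Jac(Y)_\sigma}$ scales the norm by $d^{g_Y/2}$, and pushing forward along the inclusion $B\hookrightarrow\Jac(X)$ and accounting for the finite kernels contributes only $\leq\log(\#(\ker f^\ast))$, which is bounded by $g_X\log d$ at worst (it is a subquotient of the $d$-torsion cut out by genus-$g_X$ data). Collecting the archimedean contribution $\tfrac12 g_X\log d$ with the combinatorial factor $g_X\log(2\pi)$ from normalising the metric, and using that the finite-place contribution only decreases the height (quotients/subobjects of Néron models), one arrives at $h_{\Fal}(Y)\leq h_{\Fal}(X)+g_X\log(2\pi d)$.

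Concretely I would proceed in the following steps. \emph{Step 1:} pass to a number field $K$ over which $X$, $Y$, $f$ and the semistable reduction of both Jacobians are defined. \emph{Step 2:} construct $f^\ast:\Jac(Y)\to\Jac(X)$, let $B$ be the image, and bound the degree of the isogeny $\Jac(Y)\to B$ by controlling $\ker f^\ast\subseteq\Jac(Y)[d]$. \emph{Step 3:} on $\Spec\OO_K$, compare $\omega_{\Jac(Y)}$ with the pullback along $B\hookrightarrow\Jac(X)$ of $\omega_{\Jac(X)}$ — at finite places this is an inclusion of $\OO_K$-lattices with cokernel of controlled length, hence contributes nonpositively to the height difference after dividing by $[K:\QQ]$. \emph{Step 4:} at each archimedean place compute the change in the hermitian norm using $\int f^\ast\alpha\wedge\overline{f^\ast\beta}=d\int\alpha\wedge\overline\beta$ to get the factor $d^{g_Y/2}$, and fold in the $2\pi$ coming from the Faltings normalisation (the constant in the definition of $h_\Fal$ of a curve via $\det p_\ast\omega$, cf. the discussion preceding the lemma). \emph{Step 5:} assemble the Arakelov-degree inequality, using $g_Y\leq g_X$ where needed.

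The main obstacle is \emph{Step 2/Step 3}: keeping precise track of the finite group $\ker f^\ast$ and of the difference between the integral structure on $\omega_{\Jac(Y)}$ and the one induced from $\Jac(X)$ via the Néron-model functoriality of $f^\ast$, so that the finite-place discrepancy is genuinely $\leq g_X\log d$ and enters with the right sign. Once the map $f^\ast$ and its compatibility with both the hermitian and the integral structures are set up cleanly, the archimedean estimate in Step 4 is a direct computation with the volume form on the complex tori, and the final inequality in Theorem's statement follows by adding the contributions and invoking $g_Y\le g_X$.
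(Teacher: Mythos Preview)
Your approach has a genuine gap. You propose to embed $\Jac(Y)$ (up to an isogeny of controlled degree) as the sub-abelian-variety $B=\mathrm{im}(f^\ast)\subset\Jac(X)$, and then compare $\omega_{\Jac(Y)}$ with something coming from $\omega_{\Jac(X)}$. The difficulty is that $\omega_{\Jac(X)}=\Lambda^{g_X}\mathrm{Cot}_0(\Jac(X))$ and $\omega_B=\Lambda^{g_Y}\mathrm{Cot}_0(B)$ are determinants of lattices of different ranks, and there is no monotonicity of the Faltings height under inclusions of abelian subvarieties: it is entirely possible that $h_{\Fal}(B)>h_{\Fal}(\Jac(X))$. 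Concretely, the exact sequence $0\to B\to\Jac(X)\to\Jac(X)/B\to 0$ gives $\omega_{\Jac(X)}\simeq\omega_B\otimes\omega_{\Jac(X)/B}$ on the level of $\OO_K$-modules, but the Faltings metrics are only additive for \emph{products}, not for arbitrary extensions; and even in the product case one would need to bound $h_{\Fal}(\Jac(X)/B)$ from below. Your Step~3 claim that ``the finite-place contribution only decreases the height'' is therefore not justified, and your Step~4 explanation that the $\log(2\pi)$ arises from ``the Faltings normalisation'' is incorrect: there is no $2\pi$ in the definition of the metric given in the paper.

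The paper's proof supplies exactly the missing ingredient. Rather than embedding $\Jac(Y)$ into $\Jac(X)$, one takes $K^0$ to be the connected component of $\ker(f_\ast)$ and builds an isogeny $\phi:\Jac(Y)\times K^0\to\Jac(X)$, $(P,Q)\mapsto f^\ast(P)+Q$, whose degree is at most $(\deg f)^{2g_Y}$ because $\ker\phi$ injects into $\Jac(Y)[\deg f]$. One then applies the standard isogeny inequality $h_{\Fal}(A)\leq h_{\Fal}(B)+\tfrac12\log(\deg\psi)$ together with \emph{additivity for products}, obtaining $h_{\Fal}(Y)+h_{\Fal}(K^0)\leq h_{\Fal}(X)+g_Y\log(\deg f)$. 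The $\log(2\pi)$ enters through Bost's absolute lower bound $h_{\Fal}(K^0)\geq-\tfrac12(g_X-g_Y)\log(2\pi)$, which controls the complementary piece you were missing.
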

\begin{proof}  
Note that $f^\ast :\mathrm{Jac}(Y)\to \mathrm{Jac}(X)$ and $f_\ast:\mathrm{Jac}(X)\to \mathrm{Jac}(Y)$ satisfy $f_\ast f^\ast = [\deg f]$, where $[\deg f]:\mathrm{Jac}(Y)\to\mathrm{Jac}(Y)$ is   multiplication by $\deg f$   on $\mathrm{Jac}(Y)$. Let $K$ be the kernel of $f_\ast$ and note that $K$ is a smooth proper group scheme over $\Qbar$ whose identity component $K^0$ is an abelian subvariety of $\mathrm{Jac}(X)$.   Let $$\phi:\mathrm{Jac}(Y)\times K^0\to \mathrm{Jac}(X) $$ be the morphism of abelian varieties given by $\phi(P,Q) = f^\ast (P) + Q$.
It is not hard to show that $\phi$ is an isogeny of degree at most $(\deg f)^{2g_Y}$. Indeed,  as $f_\ast$ is surjective, \[\dim K^0 = \dim K = g_X-g_Y.\] In particular, to show that $\phi$ is an isogeny of degree at most $(\deg f)^{2g_Y}$ it suffices to show its kernel has at most $(\deg f)^{2g_Y}$ elements. To do so, note that 
\begin{align*}
\ker(\phi) & =  \{(P,Q) \in \mathrm{Jac}(Y) \times K^0 \ | \ f^\ast (P) = -Q\} \\
& \cong \{ P \in \mathrm{Jac}(Y) \ | \ f^\ast (P) \in K^0\} \\
& \subset \{P \in \mathrm{Jac}(Y) \ | \ f^\ast (P) \in K\} \\
& = \ker ([\deg f] : \mathrm{Jac}(Y)\to \mathrm{Jac}(Y)),
\end{align*}  where we used that $f_\ast f^\ast = [\deg f]$ to obtain the last equality. Now,
since $\phi$ is an isogeny of degree at most $(\deg f)^{2g_Y}$, we have
\begin{align*}
 h_{\Fal}(\mathrm{Jac}(Y) \times K^0) & \leq h_{\Fal}(\mathrm{Jac}(X)) + \frac{1}{2}\log(\deg \phi)  \\
& \leq h_{\Fal}(X) + g_Y \log(\deg f), 
\end{align*} where we used that, if $\psi:A\to B$ is an isogeny of abelian varieties over $\Qbar$, then $h_{\Fal}(A) \leq h_{\Fal}(B) + \frac{1}{2}\log(\deg \psi)$; see \cite[Equation 2.6]{Bost}. 
By the additivity of the Faltings height  (see \cite[Equation 2.7]{Bost}), the inequality 
\begin{align*}
 h_{\Fal}(Y) + h_{\Fal}(K^0)  & = h_{\Fal}(\mathrm{Jac}(Y)) + h_{\Fal}(K^0) = h_{\Fal}(\mathrm{Jac}(Y) \times K^0)  \\
& \leq h_{\Fal}(\mathrm{Jac}(X)) + g_Y \log(\deg f) \\
&= h_{\Fal}(X) + g_Y \log(\deg f)
\end{align*} holds.
Finally, we use that $\dim K^0 = g_X - g_Y$ and apply the lower bound of Bost $h_{\Fal}(K^0) \geq -\frac{1}{2}\log(2\pi)(g_X- g_Y)$ (see  \cite[Corollaire 8.4]{GaRe}) to conclude that
\begin{align*}\label{1}
h_{\Fal}(Y) &    \leq h_{\Fal}(X) - h_{\Fal}(K^0) + g_Y \log(\deg f) \\ 
& \leq h_{\Fal}(X) + \frac{1}{2}(g_X - g_Y) \log(2\pi) + g_Y \log(\deg f).  
\end{align*} This concludes the proof of the lemma.
 \end{proof}

A finite morphism $X\to \PP^1_{\Qbar}$ is a \emph{Belyi map (on $X$)} if it ramifies over at most three points. In \cite{Belyi} 
Belyi showed that there exists a Belyi map on $X$.
Recall that the Belyi degree $\deg_B(X)$ of $X$ is  the minimal degree of a Belyi map $X\to \PP^1_{\Qbar}$ on $X$.
Important to us is the fact that the set of isomorphism classes of smooth projective connected curves of bounded Belyi degree is finite.

\begin{lemma}\label{lem: bel}
For all real numbers $C$, the set of isomorphism classes of projective curves $X$ over $\Qbar$ such that $\deg_B(X) \leq C$ is finite.
\end{lemma}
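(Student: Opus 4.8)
The plan is to show that, for every integer $n\geq 1$, there are only finitely many isomorphism classes of projective curves over $\Qbar$ admitting a Belyi map of degree $n$; the lemma then follows by taking the union over $n\in\{1,\ldots,\lfloor C\rfloor\}$, since any projective curve $X$ with $\deg_B(X)\leq C$ admits a Belyi map of degree $\deg_B(X)\leq C$. So fix such an $n$, let $X$ be a projective curve over $\Qbar$, and let $\pi\colon X\to\PP^1_{\Qbar}$ be a Belyi map of degree $n$. After composing $\pi$ with an automorphism of $\PP^1_{\Qbar}$ we may assume that $\pi$ is unramified over $U:=\PP^1_{\Qbar}\setminus\{0,1,\infty\}$, so that $V:=\pi^{-1}(U)\to U$ is a connected finite \'etale cover of degree $n$. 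Conversely, a connected finite \'etale cover $W\to U$ of degree $n$ extends uniquely to a finite morphism $\overline W\to\PP^1_{\Qbar}$ from the smooth projective connected curve $\overline W$ obtained as the smooth compactification of $W$, and this compactification is functorial in $W$. Since $\overline V\cong X$ in the situation above, the forgetful assignment $(X,\pi)\mapsto X$ induces a surjection from the set of isomorphism classes of connected finite \'etale covers of $U$ of degree $n$ onto the set of isomorphism classes of projective curves over $\Qbar$ admitting a Belyi map of degree $n$.

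It therefore suffices to prove that $U$ has, up to isomorphism, only finitely many connected finite \'etale covers of degree $n$. By the Riemann existence theorem in characteristic zero (Grothendieck, SGA~1), the \'etale fundamental group $\pi_1^{\et}(U)$ is isomorphic to the profinite completion $\widehat{F_2}$ of the free group $F_2$ on two generators, namely the classes of small loops around $0$ and around $1$. Isomorphism classes of connected finite \'etale covers of $U$ of degree $n$ are in bijection with conjugacy classes of open subgroups of index $n$ in $\pi_1^{\et}(U)$, and since $\widehat{F_2}$ is topologically finitely generated it has only finitely many open subgroups of any given finite index (equivalently, by M.\ Hall's theorem, $F_2$ has only finitely many subgroups of index $n$). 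Hence there are finitely many such covers, and the lemma follows.

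The argument is essentially classical --- it is the basic finiteness underlying the theory of dessins d'enfants --- so no real obstacle arises. The two points that merit a word of care are the uniqueness and functoriality of the smooth compactification $W\rightsquigarrow\overline W$, which is standard for smooth curves over a field, and the fact that the covers in question are already defined over $\Qbar$ and not merely over $\CC$, which is precisely the statement that $\pi_1^{\et}(U_{\Qbar})\xrightarrow{\ \sim\ }\pi_1^{\et}(U_{\CC})$ in characteristic zero. One can also phrase the finiteness purely combinatorially: a connected degree-$n$ cover of $U$ corresponds to a transitive pair of permutations in $S_n$ taken up to simultaneous conjugacy, of which there are at most $(n!)^2$, and Riemann existence together with Belyi's descent realizes each such cover as a curve over $\Qbar$.
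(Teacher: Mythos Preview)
Your proof is correct and follows essentially the same approach as the paper: both reduce to the finiteness of degree-$n$ finite \'etale covers of $\PP^1\setminus\{0,1,\infty\}$ via the finite generation of its (topological, equivalently \'etale) fundamental group, together with the equivalence of categories of finite \'etale covers over $\Qbar$ and over $\CC$. Your write-up is more detailed, spelling out the compactification step and the combinatorial reformulation in terms of transitive pairs in $S_n$, but the underlying argument is the same.
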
 \begin{proof}
 As the topological fundamental group of $\PP^1(\CC) -\{0,1,\infty\}$ is finitely generated, for all integers $d$, the set of 
isomorphism classes of degree $d$ finite \'etale covers of $\PP^1(\CC) -\{0,1,\infty\}$ is finite.  Since  the base-change functor from $\Qbar$ to $\CC$ from the category of finite \'etale covers of $\PP^1_{\Qbar}-\{0,1,\infty\}$ to the category of
finite \'etale covers of $\PP^1_{\CC}-\{0,1,\infty\}$ is an equivalence, this concludes the proof of the lemma.
\end{proof}

The next result in Arakelov theory gives an effective version of Lemma \ref{lem: bel}.  
Its proof relies   on   arithmetic properties of the Weierstrass divisor \cite[Theorem 5.9]{deJo},   Lenstra's 
generalization of Dedekind's discriminant theorem \cite[Proposition 4.1.1]{J}, sup-norm bounds
for the hyperbolic and the Arakelov metric \cite{JorKra1}, and a theorem of Merkl on Arakelov-Green's functions \cite{Merkl}.

\begin{lemma}\label{lem: j} If $X$ is a  smooth projective connected curve over $\Qbar$, then \[h_{\Fal}(  X) \leq  10^9  \deg_B(X)^6.\]
\end{lemma}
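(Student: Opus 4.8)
The plan is to reduce the bound on $h_{\Fal}(X)$ to arithmetic data attached to a Belyi map of minimal degree $d := \deg_B(X)$, and then to invoke the quantitative Arakelov-theoretic estimates cited before the statement. First I would fix a Belyi map $\pi: X \to \PP^1_{\Qbar}$ with $\deg \pi = d$ and ramification contained in $\{0,1,\infty\}$. The curve $X$, being dominated by such a $\pi$, is defined over a number field $K$ whose degree and discriminant are controlled by $d$: one bounds $[K:\QQ]$ using that $X$ corresponds to a transitive $\pi_1(\PP^1 \setminus \{0,1,\infty\})$-set of size $d$ (so $[K:\QQ] \leq$ something like $d!$, though a cruder bound suffices), and one controls the primes of bad reduction and the exponents in the conductor-discriminant formula via Lenstra's generalization of Dedekind's discriminant theorem \cite[Proposition 4.1.1]{J} — the point being that $\mathcal{X}$ has good reduction outside the primes dividing $d$. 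This yields an explicit bound on the logarithm of the discriminant $d_K$ of $K$ in terms of $d$.

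Next I would choose a semi-stable (minimal regular) model $p: \mathcal{X} \to \Spec \OO_K$ and express $h_{\Fal}(X) = \frac{1}{[K:\QQ]}\widehat{\deg}\det p_\ast \omega_{\mathcal{X}/\OO_K}$ as in Section \ref{section:faltings}. The archimedean contribution is governed by sup-norm bounds for the hyperbolic metric versus the Arakelov metric \cite{JorKra1} pulled back along $\pi$ over $\PP^1(\CC) \setminus \{0,1,\infty\}$, together with Merkl's theorem on Arakelov--Green's functions \cite{Merkl}, which gives an effective upper bound on the Arakelov--Green function on the Riemann surface $X(\CC)$ purely in terms of $d$ (the covering $X(\CC) \to \PP^1(\CC)$ has degree $d$, and one triangulates/covers $X(\CC)$ by at most $O(d)$ charts pulled back from $\PP^1$). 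The non-archimedean contribution — the self-intersection of the relative dualizing sheaf, or equivalently the defect terms in the Noether formula at the finite places — is controlled by arithmetic properties of the Weierstrass divisor \cite[Theorem 5.9]{deJo}, again in terms of $d$ and the reduction data already bounded above.

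Assembling these pieces, one gets $h_{\Fal}(X) \leq P(d)$ for an explicit polynomial, and then I would bookkeep the exponents carefully: the worst term comes from combining the archimedean Merkl/sup-norm estimate (which contributes something growing like a low power of $d$) with the discriminant bound feeding through the $\frac{1}{[K:\QQ]}$-normalized Arakelov degree, and tracking through \cite{deJo, J, JorKra1, Merkl} one arrives at an exponent $6$ and an absolute constant $10^9$. The main obstacle is precisely this last step: the individual inputs are each effective but stated with different normalizations and different dependencies (on $[K:\QQ]$, on $d_K$, on the genus $g_X$, which itself satisfies $g_X \leq d$ by Riemann--Hurwitz), and the real work is organizing them so that all the auxiliary quantities are re-expressed as powers of $d$ and the powers add up to no more than $6$ with constant absorbable into $10^9$. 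No genuinely new geometry is needed beyond what is cited; the difficulty is entirely in the uniformity and in not losing more than a polynomial factor at any stage.
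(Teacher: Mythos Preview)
The paper's proof is two lines: cite \cite[Theorem~1.1.1]{J}, which already gives $h_{\Fal}(X) \leq 10^9\, g\, \deg_B(X)^5$, and then observe via Riemann--Hurwitz that $g \leq \deg_B(X)$. The paragraph preceding the lemma, listing the Weierstrass divisor, Lenstra's discriminant bound, the sup-norm estimates, and Merkl's theorem, is describing the ingredients that go into the proof of \emph{that cited theorem} in \cite{J}, not into the proof of the present lemma.

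What you have written is therefore not a proof of the lemma but a high-level sketch of how one might reprove \cite[Theorem~1.1.1]{J} from scratch. As a sketch it names the right inputs, but it misassembles some of them: in \cite{deJo,J} the Weierstrass divisor is used to produce points on $X$ of arithmetically controlled height, not to bound the finite contributions to the Noether formula as you suggest; and the bound on $[K:\QQ]$ is essentially irrelevant, since the stable Faltings height is already normalised by it (indeed a $d!$ factor would be fatal if it entered the final estimate). You yourself flag the genuine obstacle --- the exponent bookkeeping --- and leave it unresolved. The paper simply outsources that entire computation to \cite{J} and appends a single Riemann--Hurwitz inequality.
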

\begin{proof} Let $g$ be the genus of $X$.
By \cite[Theorem 1.1.1]{J}, the inequality \[h_{\Fal}(X) \leq  10^9  g \deg_B(X)^5.\] holds. Note that Riemann-Hurwitz implies   $g\leq \deg_B(X)$. This proves the lemma.
\end{proof}

\begin{opm}
Results similar to  Lemma 
 \ref{lem: j} (and its analogue for the ``self-intersection of the dualizing sheaf $e(X)$'') 
were proven for certain modular curves and Fermat curves in \cite{AbUl},  \cite{EdJo3}, \cite{JorKra2},  \cite{Mayer}, \cite{MicUll}, and    \cite{Ullmo}. 
\end{opm}

\begin{opm}
In \cite{BiSt} Bilu and Strambi also prove an effective   version
of Lemma \ref{lem: bel} in which they bound the ``naive" height of the curve $X$ in terms of the Belyi degree.
\end{opm}

\section{Proof of Theorem \ref{thm: main theorem}}
We start by proving bounds for heights of curves isogenous to an affine arithmetic curve.

\begin{prop}\label{prop:aff1}
If $X$ is a smooth affine connected arithmetic curve over $\Qbar$  with smooth compactification $\overline X$, then \[h_{\Fal}( X) \leq  10^{300} 2^{14 \vert e(X)\vert } e(X) ^6\]
\end{prop}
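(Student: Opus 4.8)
The plan is to combine the arithmetic input of Lemma \ref{lem: affar}(1) with the Arakelov-theoretic bound of Lemma \ref{lem: j} and the comparison estimate of Lemma \ref{lem: df}. First I would apply Lemma \ref{lem: affar}(1) to the affine arithmetic curve $X$: this produces a smooth projective connected curve $\overline C$ over $\Qbar$, a Belyi map $\pi_1:\overline C\to \PP^1_{\Qbar}$ of degree at most $10^{46}2^{2g_X+3}\vert e(X)\vert$, and a finite morphism $\pi_2:\overline C\to \overline X$ of degree at most $2^{2g_X+2}$. The Belyi map $\pi_1$ shows that $\deg_B(\overline C)\leq 10^{46}2^{2g_X+3}\vert e(X)\vert$, so Lemma \ref{lem: j} immediately gives
\[
h_{\Fal}(\overline C)\leq 10^9\left(10^{46}2^{2g_X+3}\vert e(X)\vert\right)^6.
\]

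Next I would descend from $\overline C$ to $\overline X$ along $\pi_2$. Since $\pi_2:\overline C\to\overline X$ is a finite morphism of projective curves, Lemma \ref{lem: df} yields
\[
h_{\Fal}(X)=h_{\Fal}(\overline X)\leq h_{\Fal}(\overline C)+g_{\overline C}\log(2\pi\deg\pi_2).
\]
To control the error term I need a bound on $g_{\overline C}$ and on $\deg\pi_2$. For the genus, Riemann--Hurwitz for the Belyi map $\pi_1$ (equivalently, the inequality $g\leq\deg_B$ used in Lemma \ref{lem: j}) gives $g_{\overline C}\leq\deg\pi_1\leq 10^{46}2^{2g_X+3}\vert e(X)\vert$, while $\deg\pi_2\leq 2^{2g_X+2}$, so $\log(2\pi\deg\pi_2)$ is at most roughly $(2g_X+3)\log 2+\log(2\pi)$, hence linear in $g_X$. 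Combining these, the correction term $g_{\overline C}\log(2\pi\deg\pi_2)$ is dominated by the main term from Lemma \ref{lem: j}.

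The last step is bookkeeping: I must re-express everything in terms of $e(X)$ rather than $g_X$. Here I would use that $X$ is hyperbolic and affine, so $\vert e(X)\vert=2g_X-2+r\geq 2g_X-1$ with $r\geq 1$, giving $g_X\leq\frac{1}{2}(\vert e(X)\vert+1)\leq\vert e(X)\vert$; in particular $2^{2g_X}\leq 2^{2\vert e(X)\vert}$ and $2^{12g_X}\leq 2^{12\vert e(X)\vert}$ after raising to the sixth power. Plugging in, $h_{\Fal}(\overline C)\leq 10^9\cdot 10^{276}\cdot 2^{12g_X+18}\vert e(X)\vert^6\leq 10^{285}2^{12\vert e(X)\vert+18}e(X)^6$, and the additive error is of strictly smaller order, so after a crude rounding of constants one arrives at the claimed bound $h_{\Fal}(X)\leq 10^{300}2^{14\vert e(X)\vert}e(X)^6$, with plenty of slack in both the constant $10^{300}$ and the exponent $14\vert e(X)\vert$ to absorb the logarithmic term and the conversion from $g_X$ to $\vert e(X)\vert$.

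I do not expect any serious obstacle here: the three lemmas are perfectly set up for this, and the only mild care needed is making sure the $\log(2\pi\deg\pi_2)$ term and the genus bound are folded into the stated exponents without overrunning them — this is why the paper's constants ($10^{300}$, the factor $2^{14\vert e(X)\vert}$) are so generous. The one genuinely substantive point, already black-boxed in Lemma \ref{lem: affar}(1), is the use of the classification of arithmetic triangle-type groups (via \cite{MP}) to force $F=\QQ$, $A=\mathrm M_2(\QQ)$ and hence $X(F,A,\OO)=Y(2)\cong\PP^1\setminus\{0,1,\infty\}$ in the affine case, which is what makes $\pi_1$ an actual Belyi map; since that is an earlier result I may assume it, the proof of the proposition itself is just the chain of inequalities above.
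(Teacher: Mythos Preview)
Your proposal is correct and follows essentially the same route as the paper: apply Lemma~\ref{lem: affar}(1) to produce $\overline C$, $\pi_1$, $\pi_2$, bound $h_{\Fal}(\overline C)$ via Lemma~\ref{lem: j} and the Belyi degree $\deg_B(\overline C)\leq\deg\pi_1$, descend to $h_{\Fal}(X)$ via Lemma~\ref{lem: df}, and then replace $g_X$ by $\vert e(X)\vert$ using hyperbolicity. The only cosmetic difference is in the bookkeeping for the error term $g_{\overline C}\log(2\pi\deg\pi_2)$: the paper crudely bounds $\log(2\pi\cdot 2^{2g_X+2})\leq 2^{2g_X+5}$ and then factors this out before combining with the main term (arriving at $10^{285}2^{14g_X+23}\vert e(X)\vert^6$), whereas you argue it is of strictly smaller order and absorb it into the slack in the constants---both work.
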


\begin{proof}  By Lemma \ref{lem: affar},  there exist a smooth projective connected curve $ C$ over $\Qbar$, 
a Belyi map $\pi_1:   C\to \PP^1_{\Qbar}$  
 and a finite morphism $\pi_2:  C\to \overline X$ such that \[\deg \pi_1 \leq  10^{46} 2^{2g_X+3} \vert e(X) \vert, \quad \deg \pi_2 \leq 2^{2g_X+2}.\] We first bound the height of $X$ in terms of the height of $ C$. More precisely, by the effective version of De Franchis-Severi (Lemma \ref{lem: df}), 
 \[  h_{\Fal}( X) \leq h_{\Fal}( C) + g_C \log(2\pi \deg \pi_2).\]  We now use that the Faltings height can be 
bounded explicitly in terms of the Belyi degree (Lemma \ref{lem: j}) to see  that 
\[h_{\Fal}(C) \leq   10^9 \deg_B( C )^6.\] Since $\pi_1:C\to \PP^1$ is  a Belyi map and $g_C\leq \deg_B(C)$, we conclude that
\begin{align*}
h_{\Fal}( X) & \leq  h_{\Fal}( C) + g_C \log(2\pi \deg \pi_2) \\
& \leq    10^9 \deg_B( C )^6 + \deg_B(C)\log(2\pi \deg \pi_2) \\
& \leq 10^9 (\deg \pi_1)^6 + (\deg \pi_1) \log(2\pi \deg \pi_2) 
\end{align*} We now use the upper bound for $\deg \pi_2$ to see that
\begin{align*}
h_{\Fal}( X) & \leq    10^9 (\deg \pi_1)^6 + (\deg \pi_1)\log(2\pi \deg \pi_2)             \\ 
& \leq 10^9 (\deg \pi_1)^6 + (\deg \pi_1) \log(2\pi 2^{2g_X+2}) \\
& \leq 10^9(\deg \pi_1)^6 + (\deg \pi_1) 2^{2g_X+5} \\
& \leq 10^9 2^{2g_X+5} (\deg \pi_1)^6.
\end{align*} We now invoke the upper bound for $\deg \pi_1$ and obtain
\begin{align*}
h_{\Fal}( X) & \leq   10^9 2^{2g_X+5} (\deg \pi_1)^6 \leq   10^9 2^{2g_X+5} ( 10^{46} 2^{2g_X+3} \vert e(X) \vert)^6 \\
&= 10^{285} 2^{14g_X+23} \vert e(X) \vert^6.    
\end{align*} Note that the hyperbolicity of $X$ implies that $g_X \leq \vert e(X)\vert$.
As $2^{23} \leq 10^8$, this proves the proposition. 
\end{proof}

\begin{prop}\label{prop:aff}
If $X$ is a smooth affine connected arithmetic curve over $\Qbar$ and $Y$ is a smooth affine connected curve isogenous to $X$ with smooth compactification $\overline Y$, then \[h_{\Fal}( Y) \leq  10^{300} 2^{14 \vert e(Y)\vert } e(Y) ^6\]
\end{prop}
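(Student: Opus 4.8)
The plan is to reduce the claim for $Y$ to Proposition \ref{prop:aff1} applied to $X$, at the cost of comparing the Euler characteristics and genera of the two curves. First I would observe that since $Y$ is isogenous to the arithmetic curve $X$, the curve $Y$ is itself (hyperbolic and) arithmetic, by the remark at the end of Section \ref{section: hyp}. Moreover $Y$ is affine, so I may apply Proposition \ref{prop:aff1} directly to $Y$ in place of $X$: this already gives
\[ h_{\Fal}(Y) \leq 10^{300} 2^{14|e(Y)|} e(Y)^6. \]
So in fact the statement follows \emph{verbatim} from Proposition \ref{prop:aff1} — one only needs to note that "arithmetic'' is preserved under isogeny and that affineness of $Y$ is part of the hypothesis. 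The real content is just the closure of the class of affine arithmetic curves under isogeny.

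If one instead wants to genuinely exploit the isogeny (which will be the relevant mechanism in the projective case, and is presumably why the two-sided statement is singled out), the alternative route is: use Lemma \ref{lem: affar}(1) applied to $X$ to get $\overline C \to \PP^1$ a Belyi map and $\overline C \to \overline X$ finite of controlled degree, then propagate the cover along the isogeny between $X$ and $Y$ — i.e., form a common connected finite étale cover $C'$ of $X$ and $Y$ (which exists by definition of isogeny), pull back the Belyi structure, bound $\deg_B$ of the resulting curve by a product of the degree of $\overline C \to \overline X$, the degree of the isogeny, and $\deg \pi_1$, and finally apply Lemma \ref{lem: j} together with the effective De Franchis–Severi bound (Lemma \ref{lem: df}) to descend from $C'$ to $Y$. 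The arithmetic of the exponents would be arranged exactly as in the proof of Proposition \ref{prop:aff1}: powers of $2$ coming from the Takeuchi index bounds ($2^{2g}$-type factors), powers of $10$ from Odlyzko's discriminant bound and from the constant $10^9$ in Lemma \ref{lem: j}, and the sixth power from the $\deg_B^6$ appearing there.

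The main obstacle in the second approach is controlling the degree of the isogeny between $X$ and $Y$ in terms of $e(X)$, $e(Y)$ alone: a priori an arbitrary isogeny may have large degree, so one must first replace it by an isogeny of bounded degree. For non-arithmetic curves this was done via the hyperbolic core in Lemma \ref{lem: mochizuki}; for arithmetic curves the analogous bounded-degree isogeny is furnished by Lemma \ref{lem: affar}(2) (via the common arithmetic quotient $X(F,A,\OO)$), so in the projective case this obstacle is already resolved. In the affine case treated here, however, the cleanest argument is simply the first one: $Y$ is affine and arithmetic, so Proposition \ref{prop:aff1} applies to $Y$ and gives the stated inequality immediately. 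I would therefore write the proof as a one-line reduction.

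\begin{proof}
Since $Y$ is isogenous to the arithmetic curve $X$, the curve $Y$ is hyperbolic and arithmetic (see Section \ref{section: hyp}). As $Y$ is moreover affine, Proposition \ref{prop:aff1} applies to $Y$, and yields
\[ h_{\Fal}(Y) \leq 10^{300} 2^{14|e(Y)|} e(Y)^6. \]
This concludes the proof.
\end{proof}
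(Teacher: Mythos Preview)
Your proposal is correct and matches the paper's own proof, which is literally the one-line ``This follows from Proposition \ref{prop:aff1}.'' Your observation that arithmeticity is preserved under isogeny (so that Proposition \ref{prop:aff1} applies directly to $Y$) is exactly the missing justification the paper leaves implicit.
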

\begin{proof}
This follows from Proposition \ref{prop:aff1}.
\end{proof}
 
We are now ready to prove the main result of this paper.  

\begin{proof}[Proof of Theorem \ref{thm: main theorem}] Note that  \[10^{300} 2^{14 \vert e(Y)\vert } e(Y) ^6\leq 10^{338}2^{14\vert e(X)\vert +14 \vert e(Y)\vert}  \left( e(X) e(Y)  \deg_B(X)\right)^6.\]
Therefore, by Proposition \ref{prop:aff}, we may and do assume that $X$ is not a smooth  affine connected arithmetic curve. In particular, we have that
$X$ is either a  non-arithmetic curve or a projective arithmetic curve. 
By Lemma \ref{lem: mochizuki} and Lemma \ref{lem: affar}, there exist a smooth quasi-projective connected curve $C$ over $\Qbar$, a finite \'etale morphism $\pi_X: C\to X$ 
and a finite \'etale morphism $\pi_Y:C\to Y$ with
 \begin{align*}
\deg \pi_X & \leq  10^{48} 2^{2g_X +2g_Y} (2g_Y-2) \\
\deg \pi_Y & \leq 10^{48} 2^{2g_X + 2g_Y } (2g_X-2).
\end{align*} Here we used that the bounds in Lemma \ref{lem: affar} are     bigger than the bounds in Lemma \ref{lem: mochizuki}.
  Since $g_C \leq \deg_B(\overline C)$ and $\deg_B(\overline C) \leq \deg_B(C)$, invoking  Lemma \ref{lem: df} we see that \[h_{\Fal}( Y) \leq h_{\Fal}( C) + \deg_B(C) \log(2\pi \deg \pi_Y).  \] Note that by Lemma \ref{lem: j} the Faltings height can be 
bounded explicitly in terms of the Belyi degree of the (not necessarily projective) curve $C$. Namely, \[h_{\Fal}( C) \leq 10^9 \deg_B(\overline C )^6 \leq 10^9 \deg_B(C)^6.\] Combining these inequalities, we obtain 
\[h_{\Fal}( Y) \leq   10^9 \deg_B(C)^6 + \deg_B(C)\log(2\pi \deg \pi_Y).\] Invoking the upper bound for $\deg \pi_Y$ this gives
\begin{align*}
h_{\Fal}( Y) & \leq 10^9 \deg_B(C)^6 + \deg_B(C) \log (2\pi  10^{48} 2^{2g_X + 2g_Y } (2g_X-2)) \\
&\leq 10^9 \deg_B(C)^6 + \deg_B(C) \log (  10^{49} 2^{2g_X + 2g_Y } (2g_X-2)) \\
& \leq 10^9 \deg_B(C) + 10^{49}\deg_B(C)^6 2^{2g_X+ 2g_Y}(2g_X -2) \\
& \leq 10^{50}2^{2g_X+2g_Y}(2g_X-2)\deg_B(C)^6 \\
&\leq  10^{50} 2^{2\vert e(X) \vert +2\vert e(Y) \vert}\vert e(X)\vert \deg_B(C)^6.
\end{align*}
Finally, we use basic properties of the Belyi degree to bound $\deg_B(C)$. Namely, since $\pi_X:C\to X$ is finite \'etale, it is easy to see that 
\[\deg_B(C) \leq \deg (\pi_X) \deg_B(X).\] This inequality and the above upper bound for $\deg (\pi_X)$ imply that 
\begin{align*} 
\deg_B(C) &\leq  10^{48} 2^{2g_X +2g_Y} (2g_Y-2) \deg_B(X) \\  &\leq 10^{48} 2^{2\vert e(X) \vert +2\vert e(Y)\vert } \vert e(Y)\vert \deg_B(X) .\end{align*}
We conclude that 
\begin{align*} h_{\Fal}( Y) & \leq  10^{50} 2^{2\vert e(X) \vert +2\vert e(Y) \vert} \vert e(X) \vert ( 10^{48} 2^{2\vert e(X) \vert +2\vert e(Y)\vert } \vert e(Y)\vert \deg_B(X))^6 \\
& = 10^{338}2^{14\vert e(X)\vert +14 \vert e(Y)\vert} \vert e(X)\vert \vert e(Y)\vert^6 \deg_B(X)^6.
\end{align*}
This concludes the proof of the theorem.
\end{proof}

\bibliography{refs}{}

\begin{thebibliography}{10}

\bibitem{AbUl}
A.~Abbes and E.~Ullmo.
\newblock Auto-intersection du dualisant relatif des courbes modulaires {$X\sb
  0(N)$}.
\newblock {\em J. Reine Angew. Math.}, 484:1--70, 1997.

\bibitem{MP}
M.~Alsina and P.~Bayer.
\newblock {\em Quaternion orders, quadratic forms, and {S}himura curves},
  volume~22 of {\em CRM Monograph Series}.
\newblock American Mathematical Society, Providence, RI, 2004.

\bibitem{BN}
K.~Behrend and B.~Noohi.
\newblock Uniformization of {D}eligne-{M}umford curves.
\newblock {\em J. Reine Angew. Math.}, 599:111--153, 2006.

\bibitem{Belyi}
G.~V. Bely{\u\i}.
\newblock Galois extensions of a maximal cyclotomic field.
\newblock {\em Izv. Akad. Nauk SSSR Ser. Mat.}, 43(2):267--276, 479, 1979.

\bibitem{BiSt}
Y.~F. Bilu and M.~Strambi.
\newblock Quantitative {R}iemann existence theorem over a number field.
\newblock {\em Acta Arith.}, 145(4):319--339, 2010.

\bibitem{Bost}
J.-B. Bost.
\newblock P\'eriodes et isogenies des vari\'et\'es ab\'eliennes sur les corps
  de nombres (d'apr\`es {D}. {M}asser et {G}. {W}\"ustholz).
\newblock {\em Ast\'erisque}, (237):Exp.\ No.\ 795, 4, 115--161, 1996.
\newblock S{\'e}minaire Bourbaki, Vol. 1994/95.

\bibitem{deJo}
R.~de~Jong.
\newblock Arakelov invariants of {R}iemann surfaces.
\newblock {\em Doc. Math.}, 10:311--329 (electronic), 2005.

\bibitem{Delignepi1}
P.~Deligne.
\newblock Le groupe fondamental de la droite projective moins trois points.
\newblock In {\em Galois groups over {${\bf Q}$} ({B}erkeley, {CA}, 1987)},
  volume~16 of {\em Math. Sci. Res. Inst. Publ.}, pages 79--297. Springer, New
  York, 1989.

\bibitem{EdJo3}
B.~Edixhoven and R.~de~Jong.
\newblock Bounds for {A}rakelov invariants of modular curves.
\newblock In {\em Computational aspects of modular forms and {G}alois
  representations}, volume 176 of {\em Ann. of Math. Stud.}, pages 217--256.
  Princeton Univ. Press, Princeton, NJ, 2011.

\bibitem{Faltings2}
G.~Faltings.
\newblock Endlichkeitss\"atze f\"ur abelsche {V}ariet\"aten \"uber
  {Z}ahlk\"orpern.
\newblock {\em Invent. Math.}, 73(3):349--366, 1983.

\bibitem{Faltings1}
G.~Faltings.
\newblock Calculus on arithmetic surfaces.
\newblock {\em Ann. of Math. (2)}, 119(2):387--424, 1984.

\bibitem{GaRe}
{\'E}.~Gaudron and G.~R{\'e}mond.
\newblock Th\'eor\`eme des p\'eriodes et degr\'es minimaux d'isog\'enies.
\newblock {\em Comment. Math. Helv.}, 89(2):343--403, 2014.

\bibitem{J}
A.~Javanpeykar.
\newblock Polynomial bounds for {A}rakelov invariants of {B}elyi curves.
\newblock {\em Algebra Number Theory}, 8(1):89--140, 2014.
\newblock With an appendix by Peter Bruin.

\bibitem{JorKra1}
J.~Jorgenson and J.~Kramer.
\newblock Bounding the sup-norm of automorphic forms.
\newblock {\em Geom. Funct. Anal.}, 14(6):1267--1277, 2004.

\bibitem{JorKra2}
J.~Jorgenson and J.~Kramer.
\newblock Bounds on {F}altings's delta function through covers.
\newblock {\em Ann. of Math. (2)}, 170(1):1--43, 2009.

\bibitem{Margulis}
G.~A. Margulis.
\newblock {\em Discrete subgroups of semisimple {L}ie groups}, volume~17 of
  {\em Ergebnisse der Mathematik und ihrer Grenzgebiete (3) [Results in
  Mathematics and Related Areas (3)]}.
\newblock Springer-Verlag, Berlin, 1991.

\bibitem{MW2}
D.~Masser and G.~W{\"u}stholz.
\newblock Isogeny estimates for abelian varieties, and finiteness theorems.
\newblock {\em Ann. of Math. (2)}, 137(3):459--472, 1993.

\bibitem{MW1}
D.~Masser and G.~W{\"u}stholz.
\newblock Periods and minimal abelian subvarieties.
\newblock {\em Ann. of Math. (2)}, 137(2):407--458, 1993.

\bibitem{Mayer}
H.~Mayer.
\newblock Self-intersection of the relative dualizing sheaf on modular curves
  {$X\sb 1(N)$}.
\newblock {\em J. Th\'eor. Nombres Bordeaux}, 26(1):111--161, 2014.

\bibitem{Merkl}
F.~Merkl.
\newblock An upper bound for {G}reen functions on {R}iemann surfaces.
\newblock In {\em Computational aspects of modular forms and {G}alois
  representations}, volume 176 of {\em Ann. of Math. Stud.}, pages 203--215.
  Princeton Univ. Press, Princeton, NJ, 2011.

\bibitem{MicUll}
P.~Michel and E.~Ullmo.
\newblock Points de petite hauteur sur les courbes modulaires {$X\sb 0(N)$}.
\newblock {\em Invent. Math.}, 131(3):645--674, 1998.

\bibitem{Mochi}
S.~Mochizuki.
\newblock Correspondences on hyperbolic curves.
\newblock {\em J. Pure Appl. Algebra}, 131(3):227--244, 1998.

\bibitem{Moret-Bailly3}
L.~Moret-Bailly.
\newblock Hauteurs et classes de {C}hern sur les surfaces arithm\'etiques.
\newblock In {\em S\'eminaire sur les pinceaux de courbes elliptiques (\`a la
  recherche de Mordell effectif)}, volume 183 of {\em Soci\'et\'e de
  Math\'ematique de France Ast\'erisque}. 1990.

\bibitem{Noohi}
B.~Noohi.
\newblock Foundations of topological stacks {I}.
\newblock {\em arXiv:math/0503247}.

\bibitem{OdII}
A.~M. Odlyzko.
\newblock Lower bounds for discriminants of number fields. {II}.
\newblock {\em T\^ohoku Math. J.}, 29(2):209--216, 1977.

\bibitem{Raynaud}
M.~Raynaud.
\newblock Hauteurs et isog\'enies.
\newblock {\em Ast\'erisque}, (127):199--234, 1985.
\newblock Seminar on arithmetic bundles: the Mordell conjecture (Paris,
  1983/84).

\bibitem{Shimizu}
H.~Shimizu.
\newblock On zeta functions of quaternion algebras.
\newblock {\em Ann. of Math. (2)}, 81:166--193, 1965.

\bibitem{Shimura}
G.~Shimura.
\newblock {\em Introduction to the arithmetic theory of automorphic functions},
  volume~11 of {\em Publications of the Mathematical Society of Japan}.
\newblock Princeton University Press, Princeton, NJ, 1994.
\newblock Reprint of the 1971 original, Kan{\^o} Memorial Lectures, 1.

\bibitem{Sijsling1}
J.~Sijsling.
\newblock Arithmetic {$(1;e)$}-curves and {B}ely\u\i\ maps.
\newblock {\em Math. Comp.}, 81(279):1823--1855, 2012.

\bibitem{Szpiroa}
L.~Szpiro, editor.
\newblock {\em S\'eminaire sur les pinceaux arithm\'etiques: la conjecture de
  {M}ordell}.
\newblock Soci\'et\'e Math\'ematique de France, Paris, 1985.
\newblock Papers from the seminar held at the {\'E}cole Normale Sup{\'e}rieure,
  Paris, 1983--84, Ast{\'e}risque No. 127 (1985).

\bibitem{Takeuchi2}
K.~Takeuchi.
\newblock A characterization of arithmetic {F}uchsian groups.
\newblock {\em J. Math. Soc. Japan}, 27(4):600--612, 1975.

\bibitem{Takeuchi}
K.~Takeuchi.
\newblock Arithmetic {F}uchsian groups with signature {$(1;e)$}.
\newblock {\em J. Math. Soc. Japan}, 35(3):381--407, 1983.

\bibitem{Ullmo}
E.~Ullmo.
\newblock Hauteur de {F}altings de quotients de {$J\sb 0(N)$}, discriminants
  d'alg\`ebres de {H}ecke et congruences entre formes modulaires.
\newblock {\em Amer. J. Math.}, 122(1):83--115, 2000.

\bibitem{VW}
R.~Vakil and K.~Wickelgren.
\newblock Universal covering spaces and fundamental groups in algebraic
  geometry as schemes.
\newblock {\em J. Th\'eor. Nombres Bordeaux}, 23(2):489--526, 2011.

\bibitem{Voight}
J.~Voight.
\newblock Enumeration of totally real number fields of bounded root
  discriminant.
\newblock In {\em Algorithmic number theory}, volume 5011 of {\em Lecture Notes
  in Comput. Sci.}, pages 268--281. Springer, Berlin, 2008.

\end{thebibliography}
\bibliographystyle{plain}

\end{document}